\definecolor{qqwuqq}{rgb}{0,0.39215686274509803,0}
\definecolor{qqqqff}{rgb}{0,0,1}
\definecolor{ffqqqq}{rgb}{1,0,0}
\newcommand{\DD}{\mathbb{D}}
\newcommand{\RR}{\mathbb{R}}
\newcommand{\ZZ}{\mathbb{Z}}
\newcommand{\TT}{\mathbb{T}}
\newcommand{\ve}{\varepsilon}
\newcommand{\CCC}{\mathcal{C}}
\newcommand{\CM}{\mathcal{M}}
\newcommand{\CN}{\mathcal{N}}
\newcommand{\CK}{\mathcal{K}}
\newcommand{\diam}{\mathrm{diam}}
\newcommand{\dist}{\mathrm{dist}}
\newcommand{\wH}{\widetilde{H}}
\newcommand{\p}{\partial}
\newcommand{\wpsi}{\widetilde{\psi}}
\newcommand{\id}{\mathrm{id}}
\newcommand{\inter}{\mathrm{int}}
\newcommand{\Leb}{\mathrm{Leb}}
\newcommand{\oDD}{\overline{\DD}}
\newcommand{\hF}{\widehat{F}}
\newcommand{\hvp}{\widehat{\varphi}}
\newcommand{\hpsi}{\widehat{\psi}}
\newcommand{\hrho}{\widehat{\rho}}
\newcommand{\hf}{\widehat{f}}
\newcommand{\hH}{\widehat{H}}
\newcommand{\hX}{\widehat{X}}
\newcommand{\homega}{\widehat{\omega}}
\newtheorem{theorem}{Theorem}[section]
\newtheorem{proposition}[theorem]{Proposition}
\newtheorem{lemma}[theorem]{Lemma}
\newtheorem{sublemma}[theorem]{Sublemma}
\newtheorem{remark}[theorem]{Remark}
\numberwithin{equation}{section}
\newtheorem*{theoremA}{Theorem A}
\newtheorem*{theoremB}{Theorem B}
\newtheorem*{theoremC}{Theorem C}
\def\a{\alpha}
\def\b{\beta}
\def\vvphi{\rho}
\title[The essential coexistence phenomenon]
{The essential coexistence phenomenon in Hamiltonian dynamics}
\author{Jianyu Chen}
\address{Department of Math. \& Stat., University of Massachusetts Amherst, Amherst, MA 01003, USA}
\email{jchen@math.umass.edu}
\author{Huyi Hu}
\address{Department of Mathematics, Michigan State University, East Lansing, MI 48824, USA}
\email{hhu@math.msu.edu}
\author{Yakov Pesin}
\address{Department of Mathematics \\ Pennsylvania State University University Park, PA 16802, USA}
\email{pesin@math.psu.edu}
\author{Ke Zhang}
\address{Department of Mathematics, University of Toronto, Toronto, Ontario, Canada}
\email{kzhang@math.toronto.edu}
\begin{document}

\date{\today}

\begin{abstract}
We construct an example of a Hamiltonian flow $f^t$ on a $4$-dimensional smooth manifold $\mathcal{M}$ which after being restricted to an energy surface 
$\mathcal{M}_e$ demonstrates essential coexistence of regular and chaotic dynamics that is there is an open and dense $f^t$-invariant subset $U\subset\mathcal{M}_e$ such that the restriction $f^t|U$ has non-zero Lyapunov exponents in all directions (except the direction of the flow) and is a Bernoulli flow while on the boundary $\p U$, which has positive volume all Lyapunov exponents of the system are zero. 
\end{abstract}

\thanks{Ya.~P. was partially supported by NSF grant DMS-1400027.}
\subjclass[2010]{Primary 37D35, 37C45; secondary 37C40, 37D20}

\maketitle

\section{Introduction}\label{SIntroduction}

The problem of essential coexistence of regular and chaotic behavior lies in the core of the theory of smooth dynamical systems. The early development of the theory of dynamical systems was focused on the study of regular behavior in dynamics such as presence and stability of periodic motions, translations on surfaces, etc. The first examples of systems with highly complicated ``chaotic'' behavior -- so-called homoclinic tangles -- were already discovered by Poincar\'e in 1889 in conjunction with his work on the three-body problem. However, a rigorous study of chaotic behavior in smooth dynamical systems began in the second part of the last century due to the pioneering work of Anosov, Sinai, Smale and others which has led to the development of hyperbolicity theory. It was therefore natural to ask whether the two types of dynamical behavior -- regular and chaotic -- can coexist in an essential way. 

While regular and chaotic dynamics can coexist in various ways, in this paper we will consider one of the most interesting situations that can be described as follows. Let $f^t$ be a volume preserving smooth dynamical system with discrete or continuous time $t$ acting on a compact smooth manifold $M$. We say that $f^t$ exhibits essential coexistence if there is an open and dense $f^t$-invariant subset $U\subset M$ such that the restriction $f^t|U$ has non-zero Lyapunov exponents in all directions (except the direction of the flow for continuous $t$) and is a Bernoulli system while on the boundary 
$\p U$, which has {\bf positive volume}, all Lyapunov exponents of the system are zero. Note that it is the requirement that the boundary of $U$ has positive volume that makes coexistence essential. It follows that the Kolmogorov-Sinai entropy of $f^t|U$ is positive while the topological entropy of $f^t|\p U$ is zero, see the survey \cite{CHP13} for more information on the essential coexistence, some results, examples and open problems.  

The study of essential coexistence was inspired by discovery of KAM phenomenon in Hamiltonian dynamics and a similar phenomenon in the space of volume preserving systems. The latter was shown in the work of Cheng--Sun \cite{CS90}, Herman \cite{Her90}, Xia \cite{Xia92}, and Yoccoz \cite{Yoc92} who proved that on any manifold $M$ and for any sufficiently large $r$ there are open sets of volume preserving $C^r$ diffeomorphisms of $M$ all of which possess positive volume sets of co-dimension-$1$ invariant tori; on each such torus the diffeomorphism is $C^1$ conjugate to a Diophantine translation; all of the Lyapunov exponents are zero on the invariant tori. The set of invariant tori is nowhere dense and it is expected that it is surrounded by ``chaotic sea'', i.e., outside this set the Lyapunov exponents are nonzero almost everywhere and hence, the system has at most countably many ergodic components. It has since been an open problem to find out to what extend this picture is true.

First examples of systems with discrete and continuous time demonstrating essential coexistence, which are volume preserving, were constructed in \cite{HPT13, CHP13a, C12} (see also \cite{CHP13} for a survey of recent results). Naturally, one would like to construct examples of systems with essential coexistence which are Hamiltonian. This is what we do in this paper: we present an example of a Hamiltonian flow on a $4$-dimensional manifold which demonstrates the essential coexistence phenomenon, see Theorem A. In the course of our construction we also obtain an area preserving  $C^\infty$ diffeomorphism of a $2$-dimensional torus as well as a volume preserving $C^\infty$ flow on a $3$-dimensional manifold both demonstrating the essential coexistence phenomenon, see Theorems C and B respectively. These examples are simpler than the ones in \cite{HPT13} and \cite{CHP13a}, where the corresponding constructions require $5$-dimensional manifolds.

In Section~\ref{SStatement} we give a brief introduction to Hamiltonian dynamics recalling some basic facts and we state our main results, Theorems A, B, and C, that lead to the desired example of a Hamiltonian flow with essential coexistence. In the following three sections we present the proofs of these results. 

In particular, in Section 2 we construct a specific area preserving embedding of the closed unit disk in $\mathbb{R}^2$ onto a $2$-dimensional torus, which maps the interior of the disk onto an open and dense subset of the torus whose complement has positive area. 

This technically involved geometric construction lies at the heart of our proof of Theorem C which is presented, along with the proof of Theorem B, in Section 3. Another important ingredient of the proofs of these two theorems is the map of the $2$-dimensional unit disk known as the Katok map. This is a $C^\infty$ area preserving diffeomorphism, which is identity on the boundary of the disk and is arbitrary flat near the boundary. It is ergodic and in fact, is Bernoulli, and has non-zero Lyapunov exponents almost everywhere. It was introduced by Katok in \cite{Katok79} as the first basic step in his construction of $C^\infty$ area preserving Bernoulli diffeomorphisms with non-zero Lyapunov exponents on any surface. We will also use in a crucial way a result from \cite{HPT04} showing that the Katok map is smoothly isotopic to the identity map of the disk. This isotopy allows us to construct a flow on the $3$-dimensional torus with essential coexistence. Finally, in Section 4 we give the proof of Theorem A which provides the desired Hamiltonian flow with essential coexistence.

We note that while the Hamiltonian flow we construct in this paper displays the essential coexistence phenomenon, it does not quite exhibit KAM phenomenon, since the Cantor set of invariant tori are just unions of circles, on all of which the flow has the same linear speed. One can modify the construction in particular, increasing the dimension of invariant tori, to obtain a flow with Diophantine velocity vector on the tori.

{\bf Acknowledgement} The authors would like to thank Banff International Research Station, where part of the work was done, for their hospitality.

\section{Statement of Results}\label{SStatement}

A standard reference to Hamiltonian systems is  \cite{Arnold2006}. 
A \emph{symplectic manifold} is a smooth manifold $\CM$ equipped with a \emph{symplectic form} that is a closed non-degenerate $2$-form $\omega$. Necessarily, $\CM$ must be even dimensional (say $2n$) and $\omega^n$ defines a volume form on $\CM$. In the particular case $\CM = T^*N$, the cotangent bundle of a smooth manifold $N$, there is a natural symplectic form 
$\omega=dp\wedge dq :=\sum_{i = 1}^{n}dp_i\wedge dq_i$, where $(q, p)$ are the local coordinate in $T^*N$ induced by the local coordinate $q$ in $N$. In particular, the associated volume form is the standard one. 

Let $H$ be a $C^2$ function on a symplectic manifold $\CM$ called a \emph{Hamiltonian function}. The (autonomous) \emph{Hamiltonian vector field} $X_H$ is the unique vector field such that $\omega(X_H, \cdot) = dH(\cdot)$. For the symplectic form 
$\omega=dp\wedge dq$ one has $X_H=(\partial_p H(q, p), - \partial_q H(q, p))$. Let $f_H^t$ denote the \emph{Hamiltonian flow} generated by $X_H$. One can show that $f_H^t$ preserves the symplectic form, the volume form $\omega^n$, and the Hamiltonian function $H$. As a result, each level surface $\CM_e=\{H = e\}$, called an \emph{energy surface}, is invariant under the flow. As such, ergodic properties of an (autonomous) Hamiltonian flow are often discussed by restricting the flow to an energy surface. 

A flow $f^t$ on $\CM$ is \emph{ergodic} or \emph{Bernoulli} if for any $t\not= 0$, the time $t$ map of $f^t$ is ergodic or Bernoulli respectively. A flow $f^t$ is \emph{hyperbolic} if the flow has nonzero Lyapunov exponents almost everywhere, except for the flow direction. We say that an orbit of a flow has zero Lyapunov exponents if  the flow has zero Lyapunov exponents at any point of the orbit. Denote by $m$ the Lebesgue measure on $\CM$.

\begin{theoremA}\label{ThmA} 
There exists a $4$-dimensional manifold $\CM$ and a Hamiltonian function 
$H: \CM\to \RR$ such that restricted to any energy surface $\CM_e:=\{H=e\}$, the Hamiltonian flow $f^t: \CM_e\to \CM_e$ demonstrate the essential coexistence phenomenon that is 
\begin{enumerate}
\item[(a)] there is an open and dense set $U=U_e\in\CM_e$ such that $f^t(U)=U$ for any $t\in \RR$ and $m(U^c)>0$, where $U^c=\CM_e\setminus U$ is the complement of $U$;
\item[(b)] restricted to $U$, $f^t|U$ is hyperbolic and ergodic; in fact, $f^t|U$ is Bernoulli;
\item[(c)] restricted to $U^c$, all orbits of $f^t|{U^c}$ are periodic with zero Lyapunov exponents.
\end{enumerate}
\end{theoremA}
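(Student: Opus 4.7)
The plan is to derive Theorem A from Theorem B by lifting the three-dimensional volume preserving flow $g^t$ on $\CN$ to an autonomous Hamiltonian flow on a $4$-dimensional symplectic manifold whose energy surfaces realise $(\CN,g^t)$. As outlined in the introduction, the flow $g^t$ of Theorem B is built on $\CN=\TT^2\times S^1$ as a suspension of the area preserving diffeomorphism of Theorem C along the smooth isotopy to the identity provided by \cite{HPT04}: there is a $C^\infty$ family of $1$-periodic (in $s\in S^1$) Hamiltonian functions $H_s:\TT^2\to\RR$ such that $g^t$ is generated by the vector field $Y(q,s)=X_{H_s}(q)+\frac{\p}{\p s}$ on $\TT^2\times S^1$.

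I would then take $\CM:=\TT^2\times T^*S^1$ with global coordinates $(q_1,q_2,s,e)$ and product symplectic form $\om:=dq_1\wedge dq_2+ds\wedge de$, and define the autonomous Hamiltonian
\[
H(q_1,q_2,s,e):=e+H_s(q_1,q_2).
\]
A direct computation of $X_H$ from $\om(X_H,\cdot)=dH(\cdot)$ yields
\[
X_H=X_{H_s}+\frac{\p}{\p s}-\frac{\p H_s}{\p s}\frac{\p}{\p e},
\]
so the projection $\pi:\CM\to\CN$, $(q_1,q_2,s,e)\mapsto(q_1,q_2,s)$, carries $X_H$ to $Y$. For any $E\in\RR$ the energy surface $\CM_E=\{H=E\}$ is the graph $\{e=E-H_s(q_1,q_2)\}$; it is compact, diffeomorphic to $\CN=\TT^3$, and $\pi|\CM_E:\CM_E\to\CN$ is a smooth conjugacy between $f^t|\CM_E$ and $g^t$.

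A routine check shows that the Liouville measure on $\CM_E$ extracted from $\om^2$ and $dH$ corresponds, under $\pi|\CM_E$, to the Lebesgue volume on $\CN$, because $\p H/\p e\equiv 1$. Consequently, the $g^t$-invariant open dense set $V\subset\CN$ with $m(V^c)>0$ provided by Theorem B pulls back to an $f^t$-invariant open dense set $U\subset\CM_E$ with $m(U^c)>0$, and the Bernoulli property, the non-vanishing of Lyapunov exponents on $U$, and the periodicity together with vanishing of exponents on $U^c$ all transfer through $\pi$. This gives all three items (a)--(c) of Theorem A on every energy surface simultaneously. The main technical point will be to verify that $H_s$ is genuinely $C^\infty$ and strictly $1$-periodic in $s$ and that the isotopy underlying $g^t$ is truly Hamiltonian (i.e. has zero flux) rather than merely symplectic; both are arranged in the construction of Theorem B via the flatness at the endpoints of the Katok isotopy from \cite{HPT04} and the fact that this isotopy is the identity on a set of positive area, which forces the flux class on $\TT^2$ to vanish. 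Once these smoothness and flux issues are in place, the reduction of Theorem A to Theorem B is essentially formal.
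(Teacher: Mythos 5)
Your overall strategy --- realise the $3$-dimensional flow of Theorem B as the restriction of an autonomous Hamiltonian flow on $\TT^3\times\RR$ to its energy surfaces, with $H=e+(\text{fibrewise Hamiltonian})$, energy surfaces that are graphs over $\TT^3$, and a flux/exactness verification for the fibrewise Hamiltonians --- is the same as the paper's, and your identification of the zero-flux issue (resolved in the paper by Lemma~\ref{LtildeH}, using that $X$ is divergence-free along $\TT^2$ and vanishes on $\p U\times\TT$) is on target. However, there is a genuine gap: you describe the flow of Theorem B as generated by $Y=X_{H_s}+\frac{\p}{\p s}$, i.e.\ with vertical speed identically $1$. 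That is not the flow of Theorem B. The paper's vector field is $X=(X_1,X_2,\tau)$ with a non-constant time change $\tau$, introduced precisely because the unit-speed suspension of $f_1$ (a suspension with constant roof function) admits the eigenfunction $e^{2\pi i\theta}$, hence is not weakly mixing and a fortiori not Bernoulli. So the flow your construction actually embeds fails property (b).

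Your ansatz cannot absorb $\tau$: with $\om=dq_1\wedge dq_2+ds\wedge de$ and $H=e+H_s(q)$, the $\frac{\p}{\p s}$-component of $X_H$ is $\frac{\p H}{\p e}\equiv 1$, always. This is exactly the point where the paper needs an extra idea: it introduces the backward hitting time $\Theta(x_1,x_2,\theta)$ of the reparametrized flow, which satisfies $X_1\,\p_{x_1}\Theta+X_2\,\p_{x_2}\Theta+\tau\,\p_\theta\Theta=1$, works with the pulled-back symplectic form $\homega=\Phi^*\om$ under $\Phi(x_1,x_2,\theta,I)=(x_1,x_2,\Theta,I)$ (non-degenerate because $\|\tau-1\|_{C^1}$ is small), and sets $\hH=\wH(x_1,x_2,\Theta)+I$; only then is the Hamiltonian vector field $(X_1,X_2,\tau,v)$ with the correct third component, and conjugating back by $\Phi$ returns one to the standard form. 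Without this step (or an equivalent device for encoding the time change symplectically), your reduction proves Theorem A only for a flow that is not Bernoulli, so the argument as written does not close.
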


\begin{remark}
Actually our construction guarantees the following property of the flow $f^t|U^c$. For any $x\in U^c$, consider a small surface $\Sigma$ through $x$ which is transversal to the flow direction and let $\tilde f$ the corresponding Poincar\'e map. Then we have 
$\tilde{f}|\Sigma=\id$ and $D^k\tilde f|T_x\Sigma=0$ for any $k>0$, where $T_x\Sigma$ is the tangent space to $\Sigma$ at $x$.
\end{remark}

We obtain a flow in Theorem A by constructing a $C^\infty$ volume preserving flow on 
$\TT^3$ with essential coexistence.

\begin{theoremB}\label{ThmB} 
There exists a volume preserving $C^\infty$ flow $f^t$ on $\CM=\TT^3$ that demonstrates the essential coexistence phenomenon, i.e., it has Property (a)-(c) in Theorem A.
\end{theoremB}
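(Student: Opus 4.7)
The plan is to realize $f^t$ as the flow on a suspension of the essentially coexistent area-preserving diffeomorphism of $\TT^2$ provided by Theorem C, exploiting the fact that its nontrivial part, a copy of the Katok map, is smoothly isotopic to the identity. Let $\Phi:\oDD\to\TT^2$ be the area-preserving embedding produced in Section~2, set $V:=\Phi(\inter\DD)$, and let $g:\oDD\to\oDD$ be the Katok diffeomorphism, equal to the identity on a neighborhood of $\p\DD$ and infinitely flat there. By \cite{HPT04} there is a $C^\infty$ area-preserving isotopy $\{g_s\}_{s\in[0,1]}$ with $g_0=\id$, $g_1=g$, each $g_s$ equal to $\id$ near $\p\DD$; after a reparametrization in $s$, arrange $g_s\equiv\id$ on neighborhoods of $s=0$ and $s=1$. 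Let $X_s$ denote the divergence-free time-dependent vector field on $\DD$ generating this isotopy, and set $Y_s:=\Phi_*X_s$ on $V$, extended by zero on $\TT^2\setminus V$. The infinite-order flatness of $g_s$ along $\p\DD$ ensures that $Y_s\in C^\infty(\TT^2)$ jointly in $(x,s)$, and $Y_s$ is $\TT^1$-periodic in $s$.

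Identify $\TT^3=\TT^2\times\TT^1$ with $\TT^1=\RR/\ZZ$ and coordinate $s$, and consider on $\TT^3$ the vector field
\[ Z(x,s):=\bigl(Y_s(x),\, c(x)\bigr) \]
for a smooth positive function $c:\TT^2\to\RR$ to be chosen. Since each $Y_s$ is divergence-free on $\TT^2$ and $c$ depends only on $x$, $Z$ is divergence-free, hence the flow $f^t$ of $Z$ is $C^\infty$ and preserves Lebesgue volume on $\TT^3$. Take $U:=V\times\TT^1$, and arrange $c\equiv 1$ on $\TT^2\setminus V$. Because $Y_s$ vanishes outside $V$, the sets $U$ and $U^c$ are $f^t$-invariant; $U$ is open and dense in $\TT^3$ and $m(U^c)=m(\TT^2\setminus V)>0$, giving (a). On $U^c$, $Z\equiv(0,1)$ so $f^t(x,s)=(x,s+t\bmod 1)$: every orbit is periodic of period $1$ and all Lyapunov exponents vanish, giving (c). On $U$, the Poincar\'e return map of $f^t$ to $V\times\{0\}$ is a smooth volume-preserving reparametrization of $\Phi\,g\,\Phi^{-1}$ and therefore inherits Katok's nonzero Lyapunov exponents, yielding hyperbolicity of $f^t|U$ in directions transversal to the flow.

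The principal obstacle is to upgrade this to the Bernoulli property of $f^t|U$ at \emph{every} $t\neq 0$. With the naive choice $c\equiv 1$, the time-$1$ map of $f^t$ preserves the $s$-coordinate and is therefore not even ergodic, so the construction must break this degeneracy. The plan is to select $c$ nonconstant on $V$ (and smoothly matching $1$ on $\TT^2\setminus V$) so that the Poincar\'e return time of $f^t$ to $V\times\{0\}$ is a nontrivial positive $C^\infty$ function on $V$, generically not cohomologous to a constant over the Bernoulli base $\Phi\,g\,\Phi^{-1}$. A standard suspension-flow criterion then promotes the Bernoulli property of Katok's diffeomorphism (Ornstein theory, as in \cite{Katok79}) to the Bernoulli property of $f^t|U$ at every $t\neq 0$. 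The remaining bookkeeping -- ergodicity on $U$ via Pesin theory applied to the Poincar\'e map, and the $C^\infty$ extension of $Y_s$ and $c$ across $\p V$ -- is routine given the infinite-order flatness of the Katok isotopy along $\p\DD$.
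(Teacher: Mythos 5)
Your proposal is correct and follows essentially the same route as the paper: suspend the $h$-conjugated Katok isotopy to obtain a divergence-free field $(Y_s(x),1)$ on $\TT^3$ (the paper realizes this as the pushforward of $\partial/\partial\theta$ under $\hF(x,\theta)=(F(x,\theta),\theta)$ from the suspension manifold, using the matching condition $d^nG(x,1)=d^nG(g(x),0)$ where you instead reparametrize the isotopy to be stationary near the endpoints), and then break the constant-return-time degeneracy by replacing the vertical component with a base-dependent speed function equal to $1$ near $\p U$, exactly as the paper does with $\tau$. The one caution is that the flatness of $g_s$ must be chosen \emph{after} fixing $h$, adapted to the growth of $\|h\|_{C^n}$ and $\|h^{-1}\|_{C^n}$ on shrinking neighborhoods of $\p U$ (as in the proof of Theorem C), rather than being a consequence of ``infinite-order flatness'' alone; with that understood, your argument matches the paper's.
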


In \cite{CHP13a}, the first three authors of this paper constructed a volume preserving $C^\infty$ flow $f^t$ on a $5$-dimensional manifold with essential coexistence. Moreover, in that paper, $U^c$ is a union of $3$-dimensional invariant submanifolds and $f^t$ is a linear flow with a Diophantine frequency vector on each invariant submanifold. In the example given by Theorem A, $U^c$ is a union of one-dimensional closed 
orbits since the center direction is only one-dimensional.

The proof that Theorem B implies Theorem A is given in Section~\ref{SThm A}. 

\smallskip
To obtain the flow in Theorem B, we construct a $C^\infty$ area preserving diffeomorphism on $\TT^2$ that demonstrates the essential coexistence phenomenon.

\begin{theoremC}\label{ThmC} 
There exists a $C^\infty$ area preserving diffeomorphism $f$ on $\TT^2$ such that \begin{enumerate}
\item[(a)] there is an open and dense set $U$ such that $f(U)=U$ and $m(U^c)>0$, where $U^c=\TT^2\setminus U$ is the complement of $U$;
\item[(b)] restricted to $U$, $f|U$ is hyperbolic and ergodic; in fact, $f|U$ is Bernoulli;
\item[(c)] restricted to $U^c$, $f|{U^c}=\id$ (and hence, its Lyapunov exponents are all zero).
\end{enumerate}
\end{theoremC}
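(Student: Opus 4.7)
The plan is to combine the Katok map on the closed unit disk $\DD$ with the area-preserving embedding $\iota\colon\DD\to\TT^2$ provided by the geometric construction of Section~2. Recall that this $\iota$ is a continuous bijection whose restriction to $\inter\DD$ is a $C^\infty$ area-preserving diffeomorphism onto an open dense set $U\subset\TT^2$ (after a fixed normalisation of the two area forms), while $U^c=\TT^2\setminus U=\iota(\partial\DD)$ is a closed set of positive Lebesgue measure. On the disk side I would take Katok's $C^\infty$ area-preserving diffeomorphism $g\colon\DD\to\DD$: it is the identity on $\partial\DD$, is \emph{infinitely flat} there in the sense that every derivative of $g-\id$ vanishes on $\partial\DD$, and on $\DD$ it is Bernoulli with non-zero Lyapunov exponents $\Leb$-almost everywhere.

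With these two ingredients in hand, define $f\colon\TT^2\to\TT^2$ by
\[
 f(x)\;=\;\begin{cases}\iota\circ g\circ\iota^{-1}(x), & x\in U,\\ x, & x\in U^c.\end{cases}
\]
Since $g$ is the identity on $\partial\DD$ and $\iota$ is a homeomorphism of $\DD$ onto $\TT^2$, the two branches agree in the limit and $f$ is continuous. The invariance $f(U)=U$ holds by construction; area preservation on $U$ follows from the chain rule because both $\iota$ and $g$ preserve the relevant area forms, while on $U^c$ the identity is trivially measure preserving.

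The main obstacle is to show that $f$ is in fact $C^\infty$ across $U^c$. On the open set $U$ this is automatic. Near a point $p\in U^c$, however, the derivatives of $\iota$ cannot remain bounded as one approaches $\partial\DD$ (otherwise $\iota(\partial\DD)$ would be a $C^1$ image of a measure-zero curve, hence have measure zero), so one must show that the infinite flatness of $g$ on $\partial\DD$ overwhelms the blow-up of the derivatives of $\iota$ and $\iota^{-1}$. The strategy is to extract from the explicit construction of Section~2 quantitative bounds of the form $\|D^k\iota(y)\|+\|D^k\iota^{-1}(\iota(y))\|\le C_k\,\dist(y,\partial\DD)^{-N_k}$, and to combine them with the Katok flatness estimates $\|D^k(g-\id)(y)\|\le C'_{k,M}\,\dist(y,\partial\DD)^M$, which hold for every $M$. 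Iterated application of the chain and Leibniz rules to $f-\id=\iota\circ g\circ\iota^{-1}-\id$ will then show that every derivative of $f-\id$ extends continuously by zero across $U^c$. This simultaneously yields $C^\infty$ smoothness of $f$ on $\TT^2$ and the infinite flatness at $U^c$ mentioned in the remark after Theorem~A.

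Once the smoothness step is settled, the dynamical conclusions transfer through the conjugacy with essentially no extra work. Because $\iota|_{\inter\DD}$ is an area-preserving bijection onto the full-measure set $U\subset\TT^2$, it intertwines $g|_{\inter\DD}$ with $f|_U$ as measure-preserving systems, so $f|_U$ inherits ergodicity and the Bernoulli property from the Katok map, and its Lyapunov exponents at $\iota(y)$ coincide with those of $g$ at $y$, giving hyperbolicity on $U$. The identity relation $f|_{U^c}=\id$ gives vanishing exponents on $U^c$, while the properties of $\iota$ give openness, density, and $m(U^c)>0$. Combining these statements yields (a)--(c) of Theorem~C.
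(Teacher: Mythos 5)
Your overall architecture --- conjugate the Katok map $g$ by the embedding $h$ of Proposition~\ref{PropMain} and extend by the identity on $U^c$ --- is exactly the paper's, and your transfer of ergodicity, the Bernoulli property and the Lyapunov exponents through the measure-preserving conjugacy is fine. The gap is in the step you yourself identify as the main obstacle: smoothness of $f$ across $\p U$. Your plan is to fix the Katok map once and for all, use its infinite flatness in the form $\|D^k(g-\id)(y)\|\le C'_{k,M}\,\dist(y,\p\DD^2)^M$ for every $M$, and beat a \emph{polynomial} blow-up $\|D^k h\|,\|D^k h^{-1}\|\le C_k\,\dist(\cdot,\p\DD^2)^{-N_k}$. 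But nothing in Section~3 provides such polynomial bounds: $h$ is an infinite composition $\lim\hvp_{n-1}\circ\cdots\circ\hvp_0$ (followed by the Moser correction $\psi$) whose local distortion on the $n$-th generation of crosses involves factors like $\gamma_n>10^n$, and no rate of growth of $\|D^k h\|$ near $\p\DD^2$ is established or needed anywhere. Even granting that a fixed infinitely flat $g$ satisfies $\|D^k(g-\id)(y)\|=o(\dist(y,\p\DD^2)^M)$ for all $M$, that is still only \emph{some} super-polynomial rate, and it may lose to the uncontrolled blow-up of $D^kh$. A further mismatch is that the relevant neighborhoods of $\p\DD^2$ are $\CN_n=h^{-1}(V_n)$ with $V_n$ defined by distance to $\p U$ in the torus; since $h$ cannot be Lipschitz on $\p\DD^2$, estimates phrased in terms of $\dist(y,\p\DD^2)$ do not translate directly into estimates on the $V_n$.

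The paper resolves this by reversing your order of quantifiers, and this reversal is the essential missing idea. One first fixes $h$, forms the shells $V_n\setminus V_{n+1}$ (whose closures are compact in $U$, so the norms $\|h\|_{C^n}$ and $\|h^{-1}\|_{C^n}$ there are finite, however large), and sets
\[
\rho_n=\lambda^n\big/\big(K_n\,\|h\|_{C^n(V_{n-1}\setminus V_{n+2})}\,\|h^{-1}\|_{C^n(V_n\setminus V_{n+1})}\big).
\]
Only then does one invoke the flexibility of Katok's construction (Proposition~\ref{PropKMap}(4)): for \emph{any} prescribed neighborhoods $\CN_n$ shrinking to $\p\DD^2$ and \emph{any} rates $\rho_n$ one can build $g$ with $\|g-\id\|_{C^n(\CN_n)}\le\rho_n$. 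This gives $\|f-\id\|_{C^n(V_n)}\le\lambda^n$ with no quantitative control of $Dh$ near the boundary required. To repair your argument you should either adopt this adaptive choice of $g$, or actually prove the polynomial derivative bounds for $h$ from the explicit construction --- a substantial extra task the paper deliberately avoids. (A minor additional slip: the extension of $h$ to $\oDD^2$ is not a bijection onto $\TT^2$; it collapses $\p\DD^2$ onto the two-dimensional set $\p U$ non-injectively, so continuity of $f$ should be deduced from the $C^0$ estimate rather than from $h$ being a homeomorphism.)
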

In \cite{HPT13}, the authors constructed a $C^\infty$ volume preserving 
diffeomorphism $h$ of a five dimensional manifold that also has Properties (a)--(c), where $U^c$ is a union of $3$-dimensional invariant submanifolds and restricted to $U^c$, $h$ is the identity map (hence, with zero Lyapunov exponents).

Also, in \cite{C12}, the first author constructed a $C^\infty$ volume preserving diffeomorphism $h$ of a $4$-dimensional manifold with the same properties but the chaotic part has countably many ergodic components. There are other examples of dynamical systems that exhibit coexistence of chaotic and regular behavior though the regular part may not form a nowhere dense set, see the related reference in \cite{CHP13}.)

\section{Embedding the unit $2$-disk into the $2$-torus}\label{SProp}

In this section we state our main technical result, which provides a $C^\infty$ embedding from the open unit $2$-disk $\DD^2$ into the $2$-torus $\TT^2$ such that the image is open and dense but not of the full Lebesgue measure. We equip $\DD^2$ and $\TT^2$ with the standard Euclidean metric induced from $\RR^2$ and we denote by $d=d(x,y)$ the standard distance.

Let $M$ and $N$ be manifolds and $h: M\to N$ a $C^\infty$ diffeomorphism. The map $h$ induces a map $h^*: \bigwedge^2(N)\to \bigwedge^2(M)$, where $\bigwedge^2$ denotes the set of $2$-forms. Since $2$-forms are volume (area) forms, we can regard that $h^*$ sends smooth measures on $N$ to that on $M$. By slightly abusing notations we identity a $2$-form with the smooth measure given by this form  and with the density function of the smooth measure.

Let $m_{\DD^2}$ denote the normalized Lebesgue measure on ${\DD^2}$.

\begin{proposition}\label{PropMain} 
There exists a $C^\infty$ diffeomorphism $h$ from $\DD^2$ into $\TT^2$ with the following properties:
\begin{enumerate}
\item the image $U=h(\DD^2)$ is an open dense simply connected subset of $\TT^2$; 
moreover, $\p U=\TT^2\backslash U=E\cup L$, where $E$ is a Cantor set of positive Lebesgue measure and $L$ is a union of countably many line segments;
\item $h^*m_U=m_{\DD^2}$, where $m_U$ is the normalized Lebesgue measure on $U$;
\item $h$ can be continuously extended to $\p\DD^2$ such that $h(\p\DD^2)=\p U$,
and therefore for any $\ve>0$, $\CN_\ve=h^{-1}(V_\ve)$ is a neighborhood of 
$\p \DD^2$, where $V_\ve:=\{x\in U: d(x, \p U)<\ve\}$.
\end{enumerate}
\end{proposition}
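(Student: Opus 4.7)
The plan is to construct the target region $U \subset \TT^2$ first, then build the area-preserving map $h$ by an inductive matching procedure that also controls boundary behavior.

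Construction of $U$: I start from a fat Cantor set $C \subset [0,1]$ of Lebesgue measure in $(0,1)$, obtained by removing a prescribed sequence of open intervals $\{J_n\}$ with rapidly decreasing total length. Identifying $\TT^2$ with $[0,1]^2$ modulo the standard identifications, I set $E = C \times C$, which is a Cantor set of positive Lebesgue measure. The set $\TT^2 \setminus E$ is open, connected and dense, but its fundamental group is infinitely generated, coming from the two torus generators plus one loop around each ``hole'' created by the Cantor pattern. To cut each such loop I insert a single horizontal or vertical line segment inside one of the gap strips $J_n \times \TT$ or $\TT \times J_n$; the resulting countable collection of segments is called $L$. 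Only finitely many new loops appear at each stage of the Cantor construction, and I add only finitely many matching segments at that stage, chosen short enough to avoid $E$ and keep the complement dense. Setting $U := \TT^2 \setminus (E \cup L)$ yields an open, dense, simply connected set whose boundary is $E \cup L$, giving property (1).

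Construction of $h$: At each stage $n$ I expose a sub-region $U^{(n)} \subset U$ built from only the already-exposed finite part of the Cantor construction; this is a smoothly bounded open disk with $\overline{U^{(n)}} \subset U^{(n+1)}$ and $U^{(n)} \uparrow U$. In parallel I pre-specify a compact exhaustion $D^{(n)} \uparrow \DD^2$ by smooth closed disks with $m_{\DD^2}(D^{(n)}) = m_U(U^{(n)})$ under the normalized Lebesgue measures. I build area-preserving $C^\infty$ diffeomorphisms $h_n \colon D^{(n)} \to \overline{U^{(n)}}$ inductively: given $h_n$, I extend it to $h_{n+1}$ on the added annular region $D^{(n+1)} \setminus \inter(D^{(n)})$ using Moser's theorem on the compact annulus (both annular pieces have equal area and the same topological type, and I prescribe the boundary map to match $h_n$ on the inner boundary). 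The limit $h := \lim_n h_n$ is a $C^\infty$ area-preserving diffeomorphism $\DD^2 \to U$, giving (2). For (3) I arrange during the induction that $\DD^2 \setminus D^{(n)}$ is a thin collar of $\partial \DD^2$ whose width tends to $0$ and that $U \setminus U^{(n)} \subset V_{\ve_n}$ for some $\ve_n \to 0$; then for any $\ve > 0$, $\CN_\ve = h^{-1}(V_\ve) \supset \DD^2 \setminus D^{(n)}$ once $\ve_n < \ve$, which is a neighborhood of $\partial \DD^2$, and $h$ extends continuously to $\partial \DD^2$ with image exactly $E \cup L$.

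The main obstacle is placing the slits in $L$ so that $\TT^2 \setminus (E \cup L)$ is simply connected while neither touching $E$ nor destroying density of the complement, and doing so compatibly with both the exhaustion $U^{(n)}$ and the collar control needed for the construction of $h$. Cutting the $\pi_1$ generators loop by loop is algebraically straightforward, but the simultaneous geometric bookkeeping---matching the Lebesgue measures of $D^{(n)}$ and $U^{(n)}$ stage by stage while shrinking boundary widths---is where the technical care is concentrated. Once both sides are set up with enough flexibility, the successive applications of Moser's theorem produce the desired diffeomorphism.
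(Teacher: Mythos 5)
Your route differs from the paper's in both main steps (the paper builds $U$ from nested crosses inscribed in shrinking squares, then writes $h=\varphi\circ\psi$ with $\varphi$ a limit of explicit square-to-cross maps and $\psi$ a Moser correction on the disk), but the differences expose two genuine gaps rather than just an alternative proof.

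First, your set $U$ is not simply connected as described. If $U\subset\TT^2$ is open, connected and simply connected, then $\TT^2\setminus U$ must be connected: $U$ lifts homeomorphically to the universal cover, and an open simply connected planar domain has connected complement in $S^2$ (equivalently, the exact sequence of the pair $(\TT^2,U)$ forces $\check H^0(\TT^2\setminus U)\cong\ZZ$). But $E=C\times C$ is totally disconnected, and your slits are ``chosen short enough to avoid $E$'' and placed inside the open gap strips, so each slit is a clopen piece of $E\cup L$ at positive distance from the rest; hence $E\cup L$ is disconnected and $U$ cannot be simply connected. To cut the essential loops $\{x\}\times\TT$, $x\in J_n$, a slit must span the \emph{closed} gap with its endpoints in $E$, and one must then verify that all the local loops around the Cantor pattern are also cut and that $E\cup L$ stays closed. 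This combinatorial bookkeeping is exactly what the paper's inductive cross construction is engineered to do (there, the closures of the segments of $L$ end at corners of the surviving squares, so $E\cup L$ is connected); your shortcut skips it.

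Second, and more seriously for Statement (3): the continuous extension of $h$ to $\p\DD^2$ is the delicate heart of the proposition --- the paper points out that the Riemann map onto $U$ does \emph{not} extend continuously, precisely because $\p U$ is not locally connected. Your inductive Moser extensions over the annuli $D^{(n+1)}\setminus\inter(D^{(n)})$ give no control on the modulus of continuity of $h$ near $\p\DD^2$, and since each $h_n$ is defined only on $D^{(n)}$ you do not even have a sequence of maps of $\overline{\DD^2}$ converging uniformly; nothing rules out that the cluster set of $h$ at some boundary point is a nondegenerate continuum. The paper gets continuity by making each successive modification move points by at most $2\beta_n$ (the size of the newly attached crosses) and by a uniform Lipschitz bound on the inverses, so the approximating maps are uniformly Cauchy on $\overline{\DD^2}$. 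Some quantitative control of this type must be built into your induction; as written, the final sentence of your argument asserts the conclusion rather than proving it. (A smaller point: gluing an area-preserving annulus map to $h_n$ with prescribed inner boundary values and $C^\infty$ matching across $\p D^{(n)}$ needs Moser \emph{relative to} a neighborhood of the boundary, as in the paper's Lemma on volume forms, not just equality of total areas.)
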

In the proof of the proposition, we make an explicit construction of the map $h$. Note that $h$ can be extended continuously to the boundary $\p\DD^2$, and since 
$\dim_H(\p\DD^2)=1<2=\dim_H(\p U)$ (here $\dim_H$ denotes the Hausdorff dimension) the map cannot be Lipschitz on $\p\DD^2$.

We remark that one can use the Riemann mapping theorem to directly obtain a conformal $C^\infty$ diffeomorphism $h$ satisfying Statement 1 of Proposition~\ref{PropMain}. However, due to Carath\'eodory's theorem a conformal map can be extended to a homeomorphism of the closure of $U$ if and only if $\p U$ is a Jordan curve, which is not the case here. Therefore, such a map does not satisfy Statement 3 of Proposition~\ref{PropMain}, which is crucial for our construction.

In the proof below, we call a set a \emph{cross} of size $(\a, \b)$, where $\a<\b$, if it is 
the image under a translation of the set
\begin{equation*}
\Big(-\frac{\b}{2}, \ \frac{\b}{2}\Big)
\times \Big(-\frac{\a}{2}, \ \frac{\a}{2}\Big)
\bigcup
\Big(-\frac{\a}{2}, \ \frac{\a}{2}\Big)
\times \Big(-\frac{\b}{2}, \ \frac{\b}{2}\Big).
\end{equation*}
Roughly speaking, an $(\a, \b)$-cross consists of two intersecting open rectangles of size $\a\times \b$. The \emph{left} or \emph{right edge} of an $(\a, \b)$-cross is the image of the interval
\begin{equation*}
I_l=I_{l,\a, \b}= \Big\{-\frac{\b}{2}\Big\} 
\times \Big(-\frac{\a}{2}, \ \frac{\a}{2}\Big)
\quad \text{or} \quad 
I_r=I_{r,\a, \b}= \Big\{\frac{\b}{2}\Big\} 
\times \Big(-\frac{\a}{2}, \ \frac{\a}{2}\Big)
\end{equation*}
under the same translation respectively. The \emph{top} or \emph{bottom edge} of a cross is understood similarly.

We say that an $(\a, \b)$-cross is inscribed in a square of size $\b$
if the cross is contained in the square.

\begin{proof}[Proof of Proposition~\ref{PropMain}] {{}}
We split the proof into several steps.

{\bf Step 1.} We give an explicit construction of the sets $U$, $E$ and $L$. Consider the $2$-torus $\TT^2$ which we regard as $[0, 1]^2$ with opposite sides identified. Fix $\alpha\in(0, 0.05)$. We shall inductively construct a sequence of triples of disjoint sets $(U_n, E_n, L_n)$, where
\begin{itemize}
\item $U_n$ is a simply connected open subset in $\TT^2$ and $U_n\supset U_{n-1}$;
\item $E_n$ is a disjoint union of $4^n$ identical closed squares and 
$E_n\subset E_{n-1}$;
\item $L_n=\TT^2\backslash (U_n\cup E_n)$ consists of finitely many line segments and $L_n\supset L_{n-1}$.
\end{itemize}
Let $U_1$ be a $(\a, 1)$ cross inscribed in $\TT^2=[0, 1]^2$, $E_1$ the union of four closed squares of size $\beta_1=\frac{1-\alpha}{2}$ in the complement of $U_1$, and $L_1$ consists of four line segments, which are the left/right edges and top/bottom edges of the cross $U_1$. The four squares in $E_1$ are pairwise disjoint except on the boundary of $[0, 1]^2$, see Figure~\ref{fig: U12}. 

\begin{figure}[hpt]
\centering
\begin{tikzpicture}[line cap=round,line join=round,>=triangle 45,x=.3cm,y=.3cm]
\clip(-18,-5.5) rectangle (13,6.5);
\draw [line width=2pt] (0,2)-- (4,2);
\draw [line width=2pt] (6,6)-- (4,6);
\draw [line width=2pt] (6,2)-- (10,2);
\draw [line width=2pt] (10,2)-- (10,0);
\draw [line width=2pt] (10,0)-- (6,0);
\draw [line width=2pt] (6,-4)-- (4,-4);
\draw [line width=2pt] (4,0)-- (0,0);
\draw [line width=2pt] (0,0)-- (0,2);
\draw [line width=2pt] (4,2)-- (4,3.6);
\draw [line width=2pt] (4,3.6)-- (2.4,3.6);
\draw [line width=2pt] (2.4,3.6)-- (2.4,2);
\draw [line width=2pt] (1.6,3.6)-- (1.5982627741675641,2);
\draw [line width=2pt] (0,3.6)-- (1.6,3.6);
\draw [line width=2pt] (1.6,4.4)-- (1.6,6);
\draw [line width=2pt] (1.6,6)-- (2.4,6);
\draw [line width=2pt] (2.4,6)-- (2.4,4.4);
\draw [line width=2pt] (2.4,4.4)-- (4,4.4);
\draw [line width=2pt] (4,4.4)-- (4,6);
\draw [line width=2pt] (1.6,4.4)-- (0,4.4);
\draw [line width=2pt] (0,4.4)-- (0,3.6);
\draw [line width=2pt] (6,2)-- (6,3.6);
\draw [line width=2pt] (6,3.6)-- (7.6,3.6);
\draw [line width=2pt] (7.6,3.6)-- (7.6,2);
\draw [line width=2pt] (8.4,2)-- (8.4,3.6);
\draw [line width=2pt] (8.4,3.6)-- (10,3.6);
\draw [line width=2pt] (10,3.6)-- (10,4.4);
\draw [line width=2pt] (10,4.4)-- (8.4,4.4);
\draw [line width=2pt] (6,6)-- (6,4.4);
\draw [line width=2pt] (6,4.4)-- (7.6,4.4);
\draw [line width=2pt] (7.6,4.4)-- (7.6,6);
\draw [line width=2pt] (7.6,6)-- (8.4,6);
\draw [line width=2pt] (8.4,6)-- (8.4,4.4);
\draw [line width=2pt] (4,0)-- (4,-1.6);
\draw [line width=2pt] (4,-1.6)-- (2.4,-1.6);
\draw [line width=2pt] (4,-2.4)-- (2.4,-2.4);
\draw [line width=2pt] (4,-2.4)-- (4,-4);
\draw [line width=2pt] (2.4,-2.4)-- (2.4,-4);
\draw [line width=2pt] (2.4,-4)-- (1.6,-4);
\draw [line width=2pt] (1.6,-4)-- (1.6,-2.4);
\draw [line width=2pt] (1.6,-2.4)-- (0,-2.4);
\draw [line width=2pt] (0,-2.4)-- (0,-1.6);
\draw [line width=2pt] (0,-1.6)-- (1.6,-1.6);
\draw [line width=2pt] (1.6,-1.6)-- (1.6,0);
\draw [line width=2pt] (2.4,0)-- (2.4,-1.6);
\draw [line width=2pt] (6,-4)-- (6,-2.4);
\draw [line width=2pt] (6,-2.4)-- (7.6,-2.4);
\draw [line width=2pt] (7.6,-2.4)-- (7.6,-4);
\draw [line width=2pt] (7.6,-4)-- (8.4,-4);
\draw [line width=2pt] (8.4,-4)-- (8.4,-2.4);
\draw [line width=2pt] (8.4,-2.4)-- (10,-2.4);
\draw [line width=2pt] (10,-2.4)-- (10,-1.6);
\draw [line width=2pt] (10,-1.6)-- (8.4,-1.6);
\draw [line width=2pt] (8.4,-1.6)-- (8.4,0);
\draw [line width=2pt] (7.6,0)-- (7.6,-1.6);
\draw [line width=2pt] (7.6,-1.6)-- (6,-1.6);
\draw [line width=2pt] (6,-1.6)-- (6,0);
\draw [line width=2pt] (-6,0)-- (-6,2);
\draw [line width=2pt] (-10,2)-- (-10,6);
\draw [line width=2pt] (-12,6)-- (-12,2);
\draw [line width=2pt] (-12,0)-- (-12,-4);
\draw [line width=2pt] (-10,-4)-- (-10,0);
\draw [line width=2pt] (-10,0)-- (-6,0);
\draw [line width=2pt] (-6,2)-- (-10,2);
\draw [line width=2pt] (-10,6)-- (-12,6);
\draw [line width=2pt] (-12,2)-- (-16,2);
\draw [line width=2pt] (-16,2)-- (-16,0);
\draw [line width=2pt] (-16,0)-- (-12,0);
\draw [line width=2pt] (-12,-4)-- (-10,-4);
\end{tikzpicture}
\caption{The first two steps of the construction: the sets $U_1$ and $U_2$} 
\label{fig: U12}
\end{figure}

Suppose $U_i$, $E_i$ and $L_i$ are all defined for $i=1, \dots, n$. By induction, $E_n$ is a union of $4^n$ identical closed squares $\{E_{n, k}\}_{1\le k\le 4^n}$ of size 
$\beta_n\times \beta_n$ that are pairwise disjoint except on the boundary of $[0, 1]^2$, where
\begin{equation}\label{def beta}
\beta_n=2^{-n}\left( 1- \sum_{k=1}^{n} 2^{k-1} \alpha^k \right).
\end{equation}
Since $\alpha\in (0, 0.05)$, we have that 
\begin{equation}\label{def beta1}
\beta_n>2^{-n}\frac{1-3\alpha}{1-2\alpha}>2^{-n-1}.
\end{equation} 
Let $U_{n+1, k}$ be the open $(\a^{n+1}, \b_n)$-cross inscribed in $\inter(E_{n, k})$. Note that each $U_{n+1, k}$ touches a unique cross $U_{n, \ell}$ inside 
$U_n\backslash U_{n-1}$ from left or right. We denote 
$U_{n+1, k}'=U_{n+1, k}\cup I_{r, \a^{n+1}, \b_n}$ or $U_{n+1, k}\cup I_{l, \a^{n+1},\b_n}$ respectively, so that $U_n\cup U_{n+1, k}'$ is simply connected. Then we define
$$
\begin{aligned}
U_{n+1}=U_n\bigcup &\left(\bigcup_{k=1}^{4^n} U_{n+1, k}' \right),\quad
E_{n+1}=\overline{\text{int}(E_n)\backslash U_{n+1}},\\
L_{n+1}&=\TT^2\backslash (U_{n+1}\cup E_{n+1}).
\end{aligned}
$$
By construction, 
\begin{itemize}
\item $U_{n+1}$ is a simply connected open subset in $\TT^2$ and 
$U_{n+1}\supset U_n$; 
\item $E_{n+1}$ is a union of $4^{n+1}$ closed squares of size 
$\beta_{n+1}\times \beta_{n+1}$, which are disjoint except on the boundary of 
$[0, 1]^2$, and $E_{n+1}\subset E_n$;
\item $L_{n+1}$ consists of finitely many line segments, and $L_{n+1}\supset L_n$.
\end{itemize}
Now we define the open set $U$, the Cantor set $E$, and the set $L$ by
$$
U=\bigcup_{n\ge0} U_n, \ \ E=\bigcap_{n\ge 0} E_n,\ \ L=\bigcup_{n\ge 1} L_n. 
$$
It is clear that $U$ is simply connected and $L=\TT^2\backslash (U\cup E)$ consists of countably many line segments. Moreover, 
$$
\Leb_{\TT^2}(E)=\lim_{n\to\infty} \Leb_{\TT^2}(E_n)=\lim_{n\to\infty} 4^n\beta_n^2
=\left( \frac{1-3\alpha}{1-2\alpha} \right)^2>0.
$$
That is, the Cantor set $E$ has positive Lebesgue measure.

\medskip
{\bf Step 2.} Our goal now is to construct a $C^\infty$ diffeomorphism 
$\varphi: \DD^2\to U$, which may not be area preserving.

By the Riemann mapping theorem, or more explicitly, by the Schwarz-Christoffel mapping from the unit disk to polygons, there is a $C^\omega$ diffeomorphism 
$\hvp_0: \DD^2\to U_2$ which can be continuously extended to $\p\DD^2$.

For any $n\ge 2$ choose $\hvp_n: U_n\to U_{n+1}$ as in Lemma~\ref{Ldef phi_n} below and define a sequence of $C^\infty$ diffeomorphisms $\varphi_n: \DD^2\to U_n$ by  
\begin{equation}\label{def psi_n}
\varphi_{n}= \hvp_{n-1} \circ \dots \circ \hvp_1\circ \hvp_0, 
\end{equation}
For any $x\in\DD^2$ we then let $\varphi(x)=\lim_{n\to\infty} \varphi_n(x)$. By Lemma~\ref{Ldef phi_n}(3), for any $x\in \oDD^2$ and $n>j>0$,
\[
\begin{aligned}
d(\varphi_j(x),\varphi_n(x))&\le \sum_{i=j}^{n-1} d(\varphi_i(x),\varphi_{i+1}(x))\\
&\le \sum_{i=j}^{n-1} d\big(\varphi_i(x),\hvp_i(\varphi_{i}(x))\big)\le\sum_{i=j}^{n-1} 2\b^i.
\end{aligned}
\]
By \eqref{def beta1}, this implies that the sequence $\{\varphi_{n}\}$ is uniformly Cauchy and hence, $\varphi$ is well defined and continuous on $\oDD^2$.   

To show that $\varphi$ is a $C^\infty$ diffeomorphism it suffices to show that $\varphi$ is a $C^\infty$ local diffeomorphism, as well as that $\varphi$ is a one-to-one map 
(see \cite{GP74}, Section 1.3). The one-to-one property follows immediately from our construction. From Lemma~\ref{Lphiconv} below, for any $x\in \DD^2$ there exists $n=n(x)\ge 1$ such that $\varphi=\varphi_{n}$ in a neighborhood of $x$, which implies that $\varphi$ is a $C^\infty$ local diffeomorphism.

\medskip
{\bf Step 3.} Now we construct a $C^\infty$ local diffeomorphism $\psi$ on $\DD^2$ such that $h:=\varphi\circ \psi$ is area preserving and can be continuously extended to 
$\oDD^2$.

Denote $\mu=\varphi_* m_U$.  Since both $m_{\DD^2}$ and $m_U$ are normalized Lebesgue measures, we have
$$
\int_{\DD^2} m_{\DD^2} =1=\int_{U} m_{U}=\int_{\DD^2} \mu.
$$
We show that there is a $C^\infty$ diffeomorphism $\psi: \DD^2\to \DD^2$ that can be 
continuously extended to $\p\DD^2$ such that $\psi_* \mu=m_{\DD^2}$. 

Set $\mu_1=m_{\oDD^2}$ and for $n> 1$ define $\mu_n$ such that 
\begin{enumerate}
\item[(i)] $\mu_n\in C^\infty(\oDD^2)$ that is the measure $\mu_n$ is absolutely continuous with respect to $m_{\oDD^2}$ with density function of class $C^\infty$;
\item[(ii)] $\mu_n=\mu$ on $\varphi^{-1}(U_{n-1})$;  
\item[(iii)] $\int_{\varphi^{-1}(U_{n,k}')}\mu_n=\int_{\varphi^{-1}(U_{n,k}')}\mu$ 
for each $k=1,\dots, 4^n$.
\end{enumerate}
It is clear that for any $n\ge 1$, $\int_{\oDD^2} \mu_n=\int_{\oDD^2} \mu=1$.

We need the following version of Moser's theorem, see Lemma~1 in \cite{GS79}.

\begin{lemma}\label{LVolume}
Let $\omega$ and $\mu$ be two volume forms on an oriented manifold $M$ and let $K$ be a connected compact set such that the support of $\omega-\mu$ is contained in the interior of $K$ and $\int_K \omega= \int_K \mu$. Then there is a $C^\infty$ diffeomorphism $\hpsi: M\to M$ such that $\hpsi|{(M\setminus K)}=\id_{(M\setminus K)}$ and $\hpsi_* \omega=\mu$.
\end{lemma}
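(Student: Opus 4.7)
The plan is to prove this by Moser's homotopy trick. First I would linearly interpolate the two volume forms by setting $\omega_t=(1-t)\omega+t\mu$ for $t\in[0,1]$. Since $\omega$ and $\mu$ both represent the orientation of $M$, their convex combination is nowhere zero, so each $\omega_t$ is a volume form; moreover on $M\setminus K$ we have $\omega-\mu\equiv 0$ and therefore $\omega_t\equiv\omega\equiv\mu$ there.

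The key step is to exhibit an $(n-1)$-form $\alpha$ on $M$ with $\mathrm{supp}(\alpha)\subset\inter(K)$ and $d\alpha=\mu-\omega$. The top-form $\eta:=\mu-\omega$ is compactly supported inside $\inter(K)$ and satisfies $\int_M\eta=\int_K(\mu-\omega)=0$ by hypothesis. The standard fact from de Rham theory that a compactly supported top-form of integral zero on a connected oriented manifold is the exterior derivative of a compactly supported form then produces such an $\alpha$. If $\inter(K)$ has several connected components one first redistributes the mass of $\eta$ among the components with bump $n$-forms supported near points joined by paths inside $K$, so that on each component the total integral vanishes and the previous fact applies componentwise; in the intended application $\inter(K)$ is connected and this is not needed.

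Having $\alpha$ in hand, I would define a time-dependent vector field $X_t$ by
\begin{equation*}
\iota_{X_t}\omega_t=-\alpha.
\end{equation*}
Nondegeneracy of $\omega_t$ makes $X_t$ a uniquely determined $C^\infty$ field, and since $\alpha$ vanishes outside $\inter(K)$ so does $X_t$. Its flow $\psi_t$ is therefore globally defined on $[0,1]\times M$ and equals the identity on $M\setminus K$ for every $t$. Using Cartan's formula together with $d\omega_t=0$ and $\tfrac{d}{dt}\omega_t=\mu-\omega=d\alpha$, one computes
\begin{equation*}
\frac{d}{dt}\bigl(\psi_t^*\omega_t\bigr)
=\psi_t^*\bigl(L_{X_t}\omega_t+\tfrac{d}{dt}\omega_t\bigr)
=\psi_t^*\bigl(d\iota_{X_t}\omega_t+d\alpha\bigr)=0,
\end{equation*}
so $\psi_t^*\omega_t\equiv\omega_0=\omega$. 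Setting $\hpsi:=\psi_1$ yields $\hpsi^*\mu=\omega$, equivalently $\hpsi_*\omega=\mu$, and $\hpsi|_{M\setminus K}=\id$, as required.

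The main obstacle is the construction of the compactly supported primitive $\alpha$; once that is granted, the rest is the routine Moser computation above, and smoothness of $\hpsi$ follows from smooth dependence of ODE flows on initial data. The rest of the argument is standard and involves no further surprises.
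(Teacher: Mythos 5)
Your argument is the standard Moser deformation trick, and its core is sound. Note, however, that the paper does not actually prove this lemma: it is quoted directly as Lemma~1 of \cite{GS79}, and the proof given there is essentially the one you outline (linear interpolation $\omega_t$, a compactly supported primitive of $\mu-\omega$, and integration of the time-dependent vector field $\iota_{X_t}\omega_t=-\alpha$). So you have supplied a correct proof of something the paper treats as a black box; your Cartan-formula computation, the completeness of the flow (compact support of $X_t$), and the conclusion $\hpsi_*\omega=\mu$ with $\hpsi=\id$ off $K$ are all fine.

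The one point that deserves more care is exactly the one you flag: the primitive $\alpha$ with $\mathrm{supp}(\alpha)\subset\inter(K)$. When $\inter(K)$ is connected, the fact that $H^n_c(\inter(K))\cong\RR$ via integration gives $\alpha$ and your proof is complete; this is the only case used in the paper, where $K=\varphi^{-1}(\overline{U}'_{n,k})$ has connected interior. But your parenthetical fix for disconnected $\inter(K)$ cannot be made to work: a path joining two components of $\inter(K)$ inside the connected set $K$ must pass through points of $\p K$, so bump forms supported near it are not supported in $\inter(K)$ (nor even in $K$), and the resulting vector field would fail to vanish on $M\setminus K$. In fact the lemma as literally stated is false in that generality: take $K\subset\RR^2$ to be two externally tangent closed disks and let $\omega-\mu$ have integral $+1$ on one open disk and $-1$ on the other; any diffeomorphism equal to the identity off $K$ extends by continuity to the identity on $\p K$, hence preserves each open disk and cannot transport mass between them. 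So the correct reading of the hypothesis is that $\inter(K)$ is connected (or that $K$ is the closure of a connected open set), which is what holds in the application; with that understood, your proof stands.
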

Note that for a fixed $n$, the sets $\varphi^{-1}(\{U_{n,k}'): k=1, \dots, 4^n\}$ are pairwise disjoint. Now for each $n\ge 1$, we can apply Lemma~\ref{LVolume} $4^n$ times with $K=\varphi^{-1}(\overline{U}'_{n,k})$ for $k=1,\dots, 4^n$ to get a $C^\infty$ diffeomorphism $\hpsi_n: \oDD^2\to \oDD^2$ such that $(\hpsi_n)_*\mu_{n+1}=\mu_{n}$ and 
$\hpsi_n|{\varphi^{-1}(U_{n-1)}}=\id$. Then we let 
\[
\psi_n=\hpsi_n\circ\dots\circ\hpsi_1 \quad\text{and}\quad \psi=\lim_{n\to\infty}\psi_n.
\]
The construction gives $\hpsi_n(\varphi^{-1}(U_{n,k}'))=\varphi^{-1}(U_{n,k}')$. By Lemma~\ref{Ldef phi_n}(3), the Lipschitz constant of $\hvp_n^{-1}$ is less than $1$ if $n$ is large enough. Since $\diam U_{n,k}'\le 2\b_n$, we obtain that 
$\diam \hvp^{-1}_n(U_{n,k}')\le 2\b_n$. This implies that $d(x,\hpsi_n (x))\le 2\b_n$ for any $x\in \DD^2$. By the same arguments as for $\varphi_n$, we can get that the sequence $\{\psi_n(x)\}$ is uniformly Cauchy and hence, $\psi: \oDD^2\to \oDD^2$ is well defined and continuous. Applying the same argument as for $\varphi$, we can also get that $\psi: \DD^2\to \DD^2$ is a $C^\infty$ diffeomorphism.

By construction, we know that $(\psi_n)_*\mu_{n+1}=\mu_1=m_{\DD^2}$. Note that by Lemma~\ref{Lphiconv}, $\DD^2=\cup_{n\ge 1}\varphi^{-1}(U_n)$. Hence, for any 
$x\in \DD^2$ there is an $n>0$ and a neighborhood on which $\mu_{n+k}=\mu_n$ for any $k>0$. It follows that $\psi_*\mu=(\psi_n)_*\mu_{n}=m_{\DD^2}$ on the neighborhood and hence, $\psi_*\mu=m_{\DD^2}$ on $\DD^2$.

{\bf Step 4.} Set $h=\varphi\circ\psi$. Clearly $h: \DD^2\to U$ is a $C^\infty$ diffeomorphism and can be continuously extended to the boundary $\p\DD^2$. Also $h_*m_{U}=\psi_*(\varphi_*m_{U})=\psi_*\mu=m_{\DD^2}$. Since 
$h: \oDD^2\to\overline U$ is continuous, the pre-image of any open set is open and hence, $\CN_\ve$ is a neighborhood of $\p\DD^2$.  All the requirements of the proposition are satisfied.
\end{proof}
To complete the proof of Proposition \ref{PropMain} it remained to prove the two technical lemmas that were used in the above construction.
\begin{lemma}\label{Ldef phi_n}
There is a sequence of $C^\infty$ diffeomorphisms $\hvp_n: U_n\to U_{n+1}$, $n\ge 1$, such that the following properties hold:
\begin{enumerate}
\item $\hvp_{n}=\id$ on $U_{n-1}$;
\item $\hvp_n$ can be continuously extended to $\p U_{n}$;
\item $d(x, \hvp_n(x))\le 2\b_n$ for any $x\in \overline{U}_n$;
\item $\hvp_n^{-1}$ is Lipschitz, and on the set 
$U_{n+1}\cap \{\hvp_{n+1}\not=\id\}$ with Lipschitz constant less than $c\gamma_n^{-1}$, where $\gamma_n=\beta_{n}/\alpha^{{n+1}}$ and $c$ is a constant 
independent of $n$. 
\end{enumerate}
\end{lemma}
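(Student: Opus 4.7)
The plan is to define $\hvp_n$ locally, with one modification per attachment site where a new cross $U_{n+1,k}'$ joins $U_n$ through its edge $I_l$ or $I_r$, and to extend by the identity everywhere else. There are $4^n$ such attachment sites, each lying inside an arm of some old cross $U_{n,\ell}'\subset U_n\setminus U_{n-1}$. I will choose the support of each local piece to be a small square $R_k$ of side $\sim \a^{n+1}$ sitting inside the old arm and adjacent to the attachment edge; this fits since each arm has width $\a^n>\a^{n+1}$. These supports can be taken pairwise disjoint, so the gluing is well defined, and they all lie in $U_n\setminus U_{n-1}$, which immediately secures Property~(1).

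For a single attachment I will set up local coordinates in which the attachment edge is the segment $\{0\}\times(-\a^{n+1}/2,\a^{n+1}/2)$, with the old arm in $\{x<0\}$ and $U_{n+1,k}$ in $\{x>0\}$. The local diffeomorphism is to send $R_k$ onto $R_k\cup U_{n+1,k}'$ and to agree with the identity in a collar of the three sides of $\p R_k$ not meeting the attachment edge. I plan to build it as the composition of two explicit bump-function stretches: first a horizontal push-out inflating the east portion of $R_k$ into the horizontal arm of the new cross, then a vertical push-out supported on the central strip of the resulting region that extrudes the vertical arm. Each stretch has the form $(x,y)\mapsto(X(x,y),Y(x,y))$ with all partials matching the identity outside its support. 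Gluing over $k$ and extending by the identity yields the global $C^\infty$ diffeomorphism $\hvp_n\colon U_n\to U_{n+1}$. Property~(2) then follows because each local piece extends continuously to $\overline{R_k}$ as the identity on three sides and as a continuous surjection from the attachment slit onto $\p U_{n+1,k}'\setminus I_l$, while outside the supports $\hvp_n$ extends as the identity; Property~(3) follows because every local piece moves points only within $R_k\cup U_{n+1,k}'$, a region of diameter at most $2\b_n$ (using $\a^{n+1}\le\b_n$).

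The main obstacle, and the step requiring the most care, is the quantitative derivative estimate in Property~(4). The horizontal push-out stretches a width $\a^{n+1}$ into a horizontal length $\b_n$, so its differential has norm of order $\gamma_n=\b_n/\a^{n+1}$, and correspondingly $|D\hvp_n^{-1}|\lesssim \gamma_n^{-1}$ on $U_{n+1,k}'$. Since Property~(1) applied at level $n+1$ gives $\{\hvp_{n+1}\neq\id\}\subset U_{n+1}\setminus U_n=\bigcup_k U_{n+1,k}'$, this delivers the required estimate on $U_{n+1}\cap\{\hvp_{n+1}\neq\id\}$. To make the constant $c$ uniform in $n$ I plan to use one and the same pair of bump-function profiles at every attachment, rescaled only by the canonical factors $\a^{n+1}$ and $\b_n$; the technical check is that the resulting derivative bounds depend only on these two scales and on fixed $C^1$ norms of the profiles. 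On the complement in $U_{n+1}$, where $\hvp_n$ is either the identity or lies in the smooth transition region of a local piece, $|D\hvp_n^{-1}|$ is bounded by a constant independent of $n$, giving the global Lipschitz claim. The delicate points will be arranging the composition of the two stretches to be genuinely $C^\infty$ across the corners of $R_k$ meeting the attachment slit, and verifying the $\gamma_n^{-1}$ bound uniformly over the entire new cross, including the vertical arm produced by the second stretch.
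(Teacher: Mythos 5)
Your overall architecture matches the paper's: a small square of side $\a^{n+1}$ placed against each attachment edge inside the old arm, stretched over the augmented cross $R_k\cup U_{n+1,k}'$, identity elsewhere, with Properties (1)--(3) following exactly as you say. The gap is in Property (4). Your key intermediate claim --- that $|D\hvp_n^{-1}|\lesssim\gamma_n^{-1}$ on all of $U_{n+1,k}'$ --- is false for \emph{any} map satisfying Properties (1)--(3), not just for your construction. Indeed, since $\hvp_n$ extends continuously to $\overline{R_k}$ and fixes the endpoints of the attachment edge (they lie in the closure of the identity region), the preimage of a cross-section of the new cross taken just past the attachment edge is a curve joining two points at distance nearly $\a^{n+1}$ apart, while the cross-section itself has length $\a^{n+1}$; hence $|D\hvp_n^{-1}|\ge 1-o(1)$ somewhere on the new cross near the attachment edge. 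The inference you use to get the claim is also invalid: for a non-conformal map, $\|D\hvp_n\|\sim\gamma_n$ only controls the \emph{largest} singular value, whereas $\|D\hvp_n^{-1}\|\le c\gamma_n^{-1}$ requires the \emph{smallest} singular value of $D\hvp_n$ to be $\gtrsim\gamma_n$, i.e.\ expansion by $\gamma_n$ in all directions. A purely horizontal push-out expands by $\gamma_n$ horizontally but by $O(1)$ vertically, so its inverse has Lipschitz constant $O(1)$, not $O(\gamma_n^{-1})$, on the horizontal arm.

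What the lemma actually needs, and what the paper proves, is the bound only on $U_{n+1}\cap\{\hvp_{n+1}\neq\id\}$, i.e.\ on the level-$(n+1)$ support squares, and these sit in the \emph{outer halves of the vertical arms} of the crosses $U_{n+1,k}'$ (the sets $\CCC_\gamma^{\pm}$ in the paper's model). You never localize to this region, so the false global claim is doing real work in your argument. To repair it you must design the stretches so that both singular values of the forward differential are $\gtrsim\gamma_n$ precisely there. The paper does this by making the push-out \emph{radial} rather than a product/shear: the outer part of each arm pulls back to a small neighborhood of a single boundary point of the previous region, with contraction of order $\gamma_n^{-1}$ in all directions, and this is verified by an explicit computation of all four partial derivatives of the inverse. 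For your two-shear composition this is not automatic: the vertical push-out must interpolate between a $\gamma_n$-fold stretch in the middle of the arm and the identity at the arm's sides, which produces an off-diagonal term $\p_x Y=O(\gamma_n)$ and hence an $O(1)$ entry in $D(V\circ H)^{-1}$ unless the profiles are chosen with care. Relatedly, a map of the form $(x,y)\mapsto(X(x,y),y)$ that is the identity near the top and bottom edges of $R_k$ cannot map the open square onto the full open rectangle (the image pinches at the corners of the attachment edge), which is exactly the difficulty the paper's radial map and infinitely flat cutoff $\chi$ are built to overcome; you flag this as a ``delicate point'' but it is where the actual content of the sublemma lies.
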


\begin{proof}[Proof of the lemma] By construction of $\{U_n\}$, the complement of the set $U_{n+1}\backslash U_n$ is a disjoint union of $4^n$ $(\a^{n+1}, \b_n)$-crosses of the form 
$$
U_{n+1, k}'=U_{n+1, k}\cup I_{l}  \text{ or } U_{n+1, k}'=U_{n+1, k}\cup I_{r}.
$$
By attaching an open square $W_{n, k}$ to the left or right edge of 
$\overline{U}_{n+1, k}'$ respectively, we obtain an augmented cross, denoted by 
$U_{n+1, k}^\sharp$, which is similar to the cross
\begin{equation}\label{fdef C}
\CCC_\gamma:=\big([0, 2+2\gamma]\times [0, 1]  \big) \bigcup 
\big( [1+\gamma, 2+\gamma] \times [-\gamma, 1+\gamma]  \big),
\end{equation}
where $\gamma=\gamma_n=\beta_{n}\alpha^{-(n+1)}$. Assume that the similar map is given by $\eta_{n+1, k}: U_{n+1, k}^\sharp\to \CCC_\gamma$, which is a composition of a translation, enlargement given by $x\to \a^{-(n+1)}x$, and possibly a reflection.
Note that we have
\begin{equation}\label{def gamma_n}
\gamma_n=\frac{\beta_{n+1}}{\alpha^{n+1}}>  \frac{2^{-n-2}}{\alpha^{n+1}} 
> \frac{1}{(2\alpha)^n}>10^n.
\end{equation}

Take the map $\sigma_{\gamma_n}$ as in Sublemma~\ref{SLsquare to cross} below. We then define a map $\hvp_n: U_n \to U_{n+1}$ by
\begin{equation}\label{def phi_n}
\hvp_{n}(x)=
\begin{cases}
\eta_{n+1, k}^{-1} \circ \sigma_{\gamma_n}\circ \eta_{n+1, k}, 
     \ & \ \text{if}\ x\in W_{n, k}, k=1, \dots, 2^n; \\
x, \ & \ \text{elsewhere}.
\end{cases}
\end{equation}
It is easy to see that $\hvp_n:{U_n}\to U_{n+1}$ is a $C^\infty$ diffeomorphism that can be extended to $\p U_n$ continuously. Since $\diam U_{n+1, k}^\sharp\le 2\b$, and 
$\hvp_n$ is a diffeomorphism from $W_{n, k}\subset U_{n+1, k}^\sharp$ to 
$U_{n+1, k}^\sharp$, we must have $d(x, \hvp_n(x))\le 2\b$. So $h$ satisfies  Requirements (1)--(3).

Note that the Lipschitz constant is preserved by conjugacy if the latter is given by a composition of isometries, enlargements, and possibly a reflection. Also note that for each $k$, the set 
$U_{n+1, k}^\sharp\cap \{\hvp_{n+1}\not=\id\}$ is contained in 
$\eta_{n+1, k}^{-1}(\CCC_\gamma^\pm)$, where $\CCC_\gamma^\pm$ is defined in Sublemma~\ref{SLsquare to cross}. So Requirement~(4) of this lemma follows from  Requirement~(3) of the sublemma with $\gamma=\gamma_n$.
\end{proof}

\begin{lemma}\label{Lphiconv}
For any $x\in \DD^2$, there exists $n(x)\ge 1$ such that $\hvp_j(x)=\hvp_{n(x)}(x)$ for any $j\ge n(x)$.
\end{lemma}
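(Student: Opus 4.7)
The plan is to reduce the stabilization of $\varphi_j(x)$ to the assertion $\varphi(x)\in U=\bigcup_n U_n$ for every $x\in\DD^2$, and then to settle that assertion by Brouwer's invariance of domain.

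First I would record the following consequence of Property~(1) of Lemma~\ref{Ldef phi_n}: since $\hvp_n$ equals the identity on $U_{n-1}$, whenever $\varphi_n(x)\in U_{n-1}$ we get $\varphi_{n+1}(x)=\hvp_n(\varphi_n(x))=\varphi_n(x)\in U_{n-1}\subset U_{m-1}$ for every $m\ge n$, and induction forces $\hvp_m$ to fix $\varphi_m(x)=\varphi_n(x)$ for every $m\ge n$. This reduces the task to finding, for each $x\in\DD^2$, some $n$ with $\varphi_n(x)\in U_{n-1}$; the same reasoning automatically yields the local version $\varphi\equiv\varphi_n$ on an open neighborhood of $x$ that is used in Step~2 of the proof of the Proposition.

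Next I would show $\varphi(x)\in U$ for every $x\in\DD^2$. The limit map $\varphi$ is continuous on $\oDD^2$ (uniform Cauchy limit of continuous maps) and, by the one-to-one property asserted in Step~2, injective on $\DD^2$. Brouwer's invariance of domain applied to $\varphi|_{\DD^2}\colon\DD^2\to\TT^2$ then gives that $\varphi(\DD^2)$ is open in $\TT^2$. Combining $\varphi_n(\DD^2)=U_n\subset U$ with uniform convergence yields $\varphi(\DD^2)\subset\overline U$; and because $U$ is open and dense in $\TT^2$ by Statement~(1) of Proposition~\ref{PropMain}, the set $\p U=E\cup L$ is nowhere dense and $\mathrm{int}(\overline U)=U$. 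Hence $\varphi(\DD^2)\subset U=\bigcup_n U_n$. For the given $x$ I then pick $N$ with $\varphi(x)\in U_N$, and by uniform convergence and openness of $U_N$ find $n(x)>N$ with $\varphi_n(x)\in U_N\subset U_{n-1}$ for every $n\ge n(x)$; the reduction above finishes the argument.

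The hard part will be to establish the injectivity of $\varphi$ on $\DD^2$ without circular appeal to Lemma~\ref{Lphiconv}, since a uniform limit of injective maps need not be injective. A clean self-contained verification uses the explicit form of the $\hvp_n$: outside the squares $W_{n,k}$ each $\hvp_n$ is the identity, and on each $W_{n,k}$ it is the stretching map of Sublemma~\ref{SLsquare to cross}; arranging $\sigma_{\gamma_n}^{-1}$ so that $\hvp_n^{-1}$ is globally $1$-Lipschitz (identity off the augmented crosses, contracting on them) gives $|\hvp_n(a)-\hvp_n(b)|\ge|a-b|$, and iterating yields $|\varphi(x)-\varphi(y)|\ge|\hvp_0(x)-\hvp_0(y)|>0$ for distinct $x,y\in\oDD^2$.
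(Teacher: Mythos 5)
Your opening reduction is fine: by Lemma~\ref{Ldef phi_n}(1), once $\varphi_n(x)\in U_{n-1}$ the sequence stabilizes, so the lemma is equivalent to showing $\varphi(x)\in U=\bigcup_n U_n$ for every $x\in\DD^2$. The problem is that your topological argument for this last assertion breaks at its key step. Since $U$ is open and \emph{dense} in $\TT^2$, we have $\overline U=\TT^2$, so $\mathrm{int}(\overline U)=\TT^2\neq U$; the implication ``$U$ open and dense $\Rightarrow \mathrm{int}(\overline U)=U$'' is exactly backwards (a proper dense open set is never regular open). Consequently the inclusion $\varphi(\DD^2)\subset\overline U$ is vacuous, and openness of $\varphi(\DD^2)$ (even granting invariance of domain) does not prevent $\varphi(\DD^2)$ from meeting $E$ -- an open set may well contain points of a positive-measure Cantor set. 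Ruling out $\varphi(x)\in\p U$ for interior $x$ is precisely the content of the lemma, and it is a real danger here: the continuous extension of $\varphi$ maps $\p\DD^2$ \emph{onto} $\p U$, so $\p U$ does lie in $\varphi(\oDD^2)$; some quantitative input is needed to show interior points avoid it. The paper supplies exactly that input via Lemma~\ref{Ldef phi_n}(4): arguing by contradiction, if $\varphi_n(x)$ lies in the region $\{\hvp_n\neq\id\}$ for all $n$, then each $\hvp_n^{-1}$ contracts by a factor $c\gamma_n^{-1}<c\cdot10^{-n}$ there, whence $d(\varphi_1(x),\p U_1)\le\prod_{k\le n}c\gamma_k^{-1}\cdot d(\varphi_{n+1}(x),\p U_{n+1})\to0$, forcing $\varphi_1(x)\in\p U_1$ and contradicting $\varphi_1(x)\in U_1$.

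A secondary gap is the injectivity needed for invariance of domain. You correctly flag that a uniform limit of injections need not be injective, but your fix -- that $\hvp_n^{-1}$ can be made globally $1$-Lipschitz -- is not a property of the construction and is not obviously arrangeable: already $d\widehat\vvphi_1^{-1}$ has the off-diagonal entry $-\p_{x_2}S$, so its operator norm exceeds $1$ wherever $\p_{x_2}S\neq0$, and the sublemma only controls Lipschitz constants on the specific subregions $\CCC_\gamma^\pm$. Asserting that one can ``arrange'' a smooth, distance-nondecreasing diffeomorphism from the square onto the cross that is the identity near $\Gamma$ is a substantial unverified claim, and the paper's own injectivity statement in Step~2 is most naturally derived \emph{from} Lemma~\ref{Lphiconv}, so you cannot lean on it here without circularity. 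As written, the proposal does not establish the lemma.
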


\begin{proof} Assuming otherwise, for any $n\ge 1$, we have  that
$\hvp_{n}(x)\in W_{n, k_{n}}\cap \{\hvp_{n}\not= \id\})$ for some $1\le k_{n}\le 4^{n}$. Therefore, by \eqref{def gamma_n} and Lemma~\ref{Ldef phi_n}(4),
{\allowdisplaybreaks
\begin{align*}
d(\hvp_1(x), \p U_1)
&\le c\gamma_1^{-1}\dots c\gamma_n^{-1} d(\hvp_{n+1}(x), \p U_{n+1}) \\
&< \prod_{k=1}^n c \cdot 10^{-k}
= c^{n} \cdot 10^{-\frac{n(n+1)}{2}}  \to 0, \ \ \text{as} \ n\to\infty,
\end{align*}
}
which implies that $\hvp_1(x)\in \p U_1$ leading to a contradiction.
\end{proof}

\begin{sublemma}\label{SLsquare to cross}
Let $\CCC_\gamma$ be defined in \eqref{fdef C}, and let
\begin{align*}
& \CCC_\gamma^+ := \left[1+\gamma, 2+\gamma \right] \times 
\left[1+(\gamma -1)/2, \ 1+ \gamma \right], \\
 \ \  
&\CCC_\gamma^- := \left[1+\gamma, 2+\gamma \right] \times 
\left[  - \gamma, \ -(\gamma -1)/2 \right].
\end{align*}
There exists $c>0$ such that for any $\gamma>10$ there is a homeomorphism $\sigma_\gamma: [0, 1]^2\to \CCC_\gamma$ which has the following properties (see Figure~\ref{fig:varphi_gamma}):
\begin{enumerate}
\item $\sigma_\gamma|{(0, 1)^2}$ is a $C^\infty$ diffeomorphism;
\item $\sigma_\gamma=\id$ in a neighborhood of $\Gamma$, where $\Gamma$ is the boundary of the unit square $[0, 1]^2$ without its right edge;
\item $\sigma_\gamma^{-1}$ is Lipschitz and on $\CCC_\gamma^\pm$ 
the Lipschitz constant is less than $c\gamma^{-1}$.
\end{enumerate}
\end{sublemma}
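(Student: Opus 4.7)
The plan is to construct $\sigma_\gamma$ as a smoothed piecewise affine map: the identity on a large neighborhood of $\Gamma$, and on a thin ``deformation region'' near the right edge of $[0,1]^2$ stretching affinely onto the rest of $\CCC_\gamma$. First I would decompose $\CCC_\gamma$ into six rectangles---the base $A=[0,1]^2$, the central square $C=[1+\gamma,2+\gamma]\times[0,1]$, the neck $B=[1,1+\gamma]\times[0,1]$, and three further arms $D=[2+\gamma,2+2\gamma]\times[0,1]$, $E=[1+\gamma,2+\gamma]\times[-\gamma,0]$, $F=[1+\gamma,2+\gamma]\times[1,1+\gamma]$---whose adjacency graph is the star centered at $C$. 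Choose $\delta$ of order $1/\gamma$ and a small constant $\epsilon_1>0$ independent of $\gamma$, and let $R\subset[0,1]^2$ be a cross-shaped region contained in the thin strip $[1-\delta,1]\times[\epsilon_1,1-\epsilon_1]$, with central square $R_C$ and four arms $R_B,R_D,R_E,R_F$ attached to its sides; each $R_X$ is designed to have both dimensions of order $1/\gamma$. The complement $\Omega_0=[0,1]^2\setminus R$ is then an open neighborhood of $\Gamma$.

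Inside each $R_X$ I would designate a slightly smaller ``core'' $R_X^\flat$ in its interior. Set $\sigma_\gamma=\id$ on $\Omega_0$, and on each core $R_X^\flat$ define $\sigma_\gamma$ to be the unique affine map (translation, axis-aligned scaling, and in the cases $X=E,F$ also a reflection) sending $R_X^\flat$ onto a sub-rectangle of $X$ of appropriate dimensions. Since each $R_X^\flat$ has both sides of order $1/\gamma$ while each $X$ has at least one side of order $\gamma$, the scaling factors are of order $\gamma$ or larger in each direction. In the transition zone $R\setminus\bigcup_X R_X^\flat$, and in thin neighborhoods of the interfaces between adjacent cores, $\sigma_\gamma$ is smoothly interpolated via a standard partition-of-unity argument: on shared core boundaries, the two affine formulas are arranged (by appropriate translation constants) to agree, while on the outer boundary of $R$ the interpolation decays to the identity. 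The resulting $\sigma_\gamma:[0,1]^2\to\CCC_\gamma$ is a homeomorphism that is $C^\infty$ on $(0,1)^2$ and equals the identity in a neighborhood of $\Gamma$, giving Properties~(1) and~(2).

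For Property~(3), observe that the preimages of $\CCC_\gamma^\pm$ lie strictly inside the cores $R_F^\flat$ and $R_E^\flat$ (provided the transition collars are chosen thin enough), so on these preimages $\sigma_\gamma$ coincides with a pure affine stretch whose scaling factors are $\geq\gamma/c$ in each direction, yielding $\|D\sigma_\gamma^{-1}\|\leq c/\gamma$ on $\CCC_\gamma^\pm$. The global Lipschitz property of $\sigma_\gamma^{-1}$ follows from the uniform $C^1$-boundedness of the smoothed map on the compact set $\CCC_\gamma$. The main obstacle in this plan is the combinatorial/topological design of the cross-shaped $R$ and its sub-regions $R_X$ so that the adjacency inside $R$ matches that of $\CCC_\gamma$, together with the selection of affine formulas that glue continuously across the shared core boundaries; once these data are fixed, the transition layers and the final mollification are routine.
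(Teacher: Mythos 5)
Your plan has a topological error at its foundation. You place the deformation region $R$ inside the strip $[1-\delta,1]\times[\epsilon_1,1-\epsilon_1]$ with $\epsilon_1>0$ a fixed constant and set $\sigma_\gamma=\id$ on $\Omega_0=[0,1]^2\setminus R$. Then $\sigma_\gamma$ fixes the sub-segments $\{1\}\times[0,\epsilon_1)$ and $\{1\}\times(1-\epsilon_1,1]$ of the right edge of the square. But every point of $\{1\}\times(0,1)$ is an \emph{interior} point of $\CCC_\gamma$ while being a boundary point of $[0,1]^2$, and a homeomorphism of the closed square onto the closed cross must carry $\p([0,1]^2)$ onto $\p\CCC_\gamma$ (invariance of domain); so no point of the open right edge can be fixed. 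Consequently the deformation region must abut the entire right edge, and you must in addition specify how that edge (length $1$) is stretched homeomorphically onto the boundary arc $\p\CCC_\gamma\setminus\Gamma$, whose length is of order $\gamma$; the proposal says nothing about this. (Compare the paper's first step $\widehat\vvphi_1$, whose sole purpose is to push the right edge outward onto the bulge $\p\Omega^+$ before any stretching is done.)

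The second gap is the step you defer as routine, which is in fact where all the work lies. Adjacent cores carry affine maps with very different linear parts --- roughly $\mathrm{diag}(O(\gamma),O(\gamma))$ on the central core versus $\mathrm{diag}(O(\gamma),O(\gamma^2))$ composed with a reflection on a core feeding a vertical arm --- and a partition-of-unity interpolation between two diffeomorphisms with disparate derivatives is not automatically injective: convex combinations of diffeomorphisms can fold. The same issue arises more severely in the collar where the $O(\gamma^2)$-stretching pieces must be joined to the identity across a band of width $O(1/\gamma)$, and it is there that one must also verify that the Lipschitz constant $c$ of Property (3) can be taken independent of $\gamma$. The paper avoids cell-by-cell gluing altogether: it writes down three explicit global maps $\widehat\vvphi_1,\widehat\vvphi_2,\widehat\vvphi_3$ taking the square onto the long rectangle $[0,2+2\gamma]\times[0,1]$, bounds their Jacobians, and then attaches the two vertical wings by applying a rotated copy of the same rectangle construction to a unit subsquare; the Lipschitz constant $c=c_0^2$ comes from composing the two explicit estimates. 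As written, your Properties (1) and (3) are asserted rather than proved, and Property (2) fails outright for the region $R$ you chose.
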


\begin{figure}[hpt]
\centering
\includegraphics[width=0.6\textwidth]{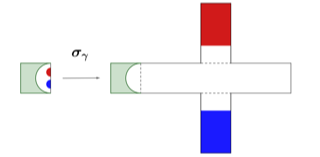}
\caption{The map $\sigma_\gamma$}
\label{fig:varphi_gamma}
\end{figure}

\begin{proof}
Set the function $p(t)=\sqrt{t(1-t)}$ for $t\in [0, 1]$, and consider the domain 
$$
\Omega^\pm:=\{(x_1, x_2)\in \RR^2: \ 0\le x_2\le 1, \ 0\le x_1\le 1\pm p(x_2)\}. 
$$
It is clear that $\Omega^-$ is a neighborhood of $\Gamma$ in $[0, 1]^2$.

We claim that there exists $c_0>0$ such that for any $\kappa>3$,
there exists a homeomorphism 
$\widehat\vvphi=\widehat\vvphi_{\kappa}: [0, 1]^2 \to [0, 2+2\kappa]\times [0, 1]$ such that  
\begin{enumerate}
\item[(H1)] $\left. \widehat\vvphi\right|{(0, 1)^2}$ is a $C^\infty$ diffeomorphism;
\item[(H2)] $ \widehat\vvphi=\mathrm{id}$ on $\Omega^-$;
\item[(H3)] $ \widehat\vvphi^{-1}$ is Lipschitz on the rectangle $\left[1+\kappa, 2+2\kappa \right]\times [0, 1]$, 
with the Lipschitz constant less than $c_0\kappa^{-1}$. 
\end{enumerate}
To see this, we pick a $C^\infty$ non-decreasing function $\chi: \RR\to [0, 1]$ such that 
$\chi(t)=0$ for all $t\le 0$, and $\chi(t)=1$ for all $t\ge 0.1$. Moreover, $\chi$ is sufficiently flat at $t=0$ such that $t\mapsto t^{-k}\chi(t)$ is $C^\infty$ for any $k>0$.
Set $c_1:=\max_{t\in \RR} \chi'(t)$. We shall define 
$\widehat\vvphi=\widehat\vvphi_3\circ \widehat\vvphi_2\circ \widehat\vvphi_1$ 
as follows (See Figure~\ref{fig: hatphi}).

\begin{figure}[hpt]
\centering
\includegraphics[width=0.85\textwidth]{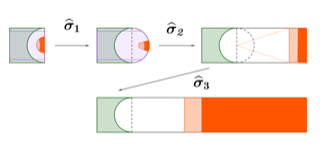}
\caption{Construction of the map $\widehat\vvphi$}
\label{fig: hatphi}
\end{figure}

First, we define a homeomorphism $\widehat\vvphi_1: [0, 1]^2\to \Omega^+$ by 
setting its inverse $\widehat\vvphi_1^{-1}(x_1, x_2)=(x_1- S(x_1, x_2), x_2)$, where 
\begin{align*}
S(x_1, x_2):=
\begin{cases}
p(x_2) \chi\left(\frac{x_1-1+p(x_2)}{2p(x_2)}\right), \ & \ \text{if} \ x_2\ne 0, 1, \\
0, \ & \ \text{if} \ x_2= 0, 1.
\end{cases}
\end{align*}
It is clear that $\widehat\vvphi_1$ is a $C^\infty$ diffeomorphism from $(0, 1)^2$ to the interior of $\Omega^+$ and $\widehat\vvphi_1=\mathrm{id}$ on $\Omega^-$. The Jacobian matrix of $d\widehat\vvphi_1^{-1}$ at $(x_1, x_2)\in \inter(\Omega^+)$ is given by 
$$
d\widehat\vvphi_1^{-1}\left( x_1, x_2 \right)
= \begin{pmatrix} 1- \p_{x_1} S &  - \p_{x_2} S \\ 0 & 1 \end{pmatrix},
$$
where 
$$
\p_{x_1} S= \frac12\chi'\left(\frac{x_1-1+p(x_2)}{2p(x_2)}\right)\in \left[0, \frac{c_1}{2}\right].
$$ 
Furthermore, if $x_2\in [0.1, 0.9]$, then
{\allowdisplaybreaks
\begin{align*}
\left| \p_{x_2} S \right|
=& \left| p'(x_2) \chi \left(\frac{x_1-1+p(x_2)}{2p(x_2)}\right)  
+ \frac{(1-x_1)p'(x_2)}{2p(x_2)} \chi'\left(\frac{x_1-1+p(x_2)}{2p(x_2)}\right) \right| \\
\le & \max_{x_2\in [0.1, 0.9]} \left|p'(x_2)\right| + 
\frac{\max\limits_{x_2\in [0.1, 0.9]} \left|p'(x_2)\right| }{ 2 \min\limits_{x_2\in [0.1, 0.9]} p(x_2)} \cdot c_1 
<2+4c_1.
\end{align*}
}Hence $\widehat\vvphi_1^{-1}$ is Lipschitz on 
$\left\{(x_1, x_2)\in \Omega^+: \ 0.1 \le x_2\le 0.9 \right\}$,
with the Lipschitz constant less than $2+4c_1$.

Second, given any $\kappa>3$, we define a homeomorphism
$\widehat\vvphi_2=\widehat\vvphi_{2, \kappa}: \Omega^+\to [0, 1+\kappa]\times [0, 1]$ by setting its inverse $(z_1, z_2)=\widehat\vvphi_2^{-1}(x_1, x_2)$ such that
$$
\begin{aligned}
\frac{z_1-1}{x_1-1}&=\frac{z_2-\frac12}{x_2-\frac12}\\
&=\frac{t(x_1, x_2)-1+\kappa}{\kappa}\chi \left( t(x_1, x_2) \right)
\frac{\chi\left( r(x_1, x_2) \right)}{r(x_1, x_2)}
\chi\left(\frac{x_1-1}{p(x_2)}\right),
\end{aligned}
$$
where 
$$
t(x_1, x_2):=\frac{(x_1-1)^2+4\kappa^2(x_2-\frac12)^2}{ \kappa^2+ 4(x_1-1)^2(x_2-\frac12)^2}
$$ 
and 
$$
r(x_1, x_2):=\sqrt{(x_1-1)^2+(x_2-\frac12)^2}.
$$ 
Note that we take $\frac{\chi\left( r(1, \frac12) \right)}{r(1, \frac12)}=0$. Also,
for $x_2=0$ or $1$, $\chi\left(\frac{x_1-1}{p(x_2)}\right)=\lim_{t\to-\infty} \chi(t)=0$ if $x_1\le 1$, and  $\chi\left(\frac{x_1-1}{p(x_2)}\right)=\lim_{t\to\infty} \chi(t)=1$ if $x_1>1$.

It is not hard to see that $\widehat\vvphi_2$ is a $C^\infty$ diffeomorphism 
from the interior of $\Omega^+$ to  $(0, 1+\kappa)\times (0, 1)$, and 
$\widehat\vvphi_2=\mathrm{id}$ on $[0, 1]^2$. Moreover, for any 
$(x_1, x_2)\in [1+0.8\kappa, 1+\kappa]\times [0, 1]$, we have that 
$0.3\le t(x_1, x_2)\le 2$ and $1<0.8\kappa\le r(x_1, x_2)\le 1.2\kappa$,
and hence, the inverse $(z_1, z_2)=\widehat\vvphi_2^{-1}(x_1, x_2)$ is given by 
{\allowdisplaybreaks
\begin{equation*}
\frac{z_1-1}{x_1-1}=\frac{z_2-\frac12}{x_2-\frac12}=
\frac{t(x_1, x_2)-1+\kappa}{\kappa r(x_1, x_2)}.
\end{equation*}
}By straightforward calculations, we have 
\begin{equation*}
\left|\p_{x_1} r\right|\le 1, \ \left|\p_{x_2} r\right|\le \kappa^{-1}, \ 
\left|\p_{x_1} t\right|\le 6\kappa^{-1}, \ \text{and} \ 
\left|\p_{x_2} t\right|\le 12,
\end{equation*}
which yields that $\frac{\p z_i}{\p x_j}\le 200\kappa^{-1}$ for $i=1, 2$ and $j=1, 2$,
and thus $\widehat\vvphi_2^{-1}$ is Lipschitz on $[1+0.8\kappa, 1+\kappa]\times [0, 1]$
with the Lipschitz constant less than $200\kappa^{-1}$. Also, since
$$
\left| \frac{z_2-\frac12}{x_2-\frac12} \right|\le \frac{1+\kappa}{0.8\kappa^2}<0.6,
$$
we have that
$$
\widehat\vvphi_2^{-1}\left( [1+0.8\kappa, 1+\kappa]\times [0, 1] \right)
\subset \left\{(x_1, x_2)\in \Omega^+: \ 0.1\le x_2\le 0.9 \right\}.
$$
Finally, we define a $C^\infty$ diffeomorphism 
$$
\widehat\vvphi_3=\widehat\vvphi_{3, \kappa}: [0, 1+\kappa]\times [0, 1]\to [0, 2+2\kappa]\times [0, 1],
$$
given by $\widehat\vvphi_3(x_1, x_2)=(T(x_1), x_2)$, where 
$$
T(x_1)=x_1+\left[10(1+\kappa^{-1})\left(x_1-1-0.8\kappa\right) - x_1\right]
\chi\left(\frac{x_1-1-0.8\kappa}{0.2\kappa} \right).
$$
Note that $\widehat\vvphi_3=\mathrm{id}$ on $[0, 1+0.8\kappa]\times [0, 1]$
and $\widehat\vvphi_3$ maps $[0, 1+0.9\kappa]\times [0, 1]$ onto $[1+\kappa, 2+2\kappa]\times [0, 1]$
with $T(x_1)$ being linear, i.e, $T(x_1)=10(1+\kappa^{-1})\left(x_1-1-0.8\kappa\right)$.
Therefore, $\widehat\vvphi_3^{-1}$ is Lipschitz on the rectangle $\left[1+\kappa, 2+2\kappa \right]\times [0, 1]$, 
with the Lipschitz constant no more than $1$.

Finally, we set $\widehat\vvphi=\widehat\vvphi_3\circ\widehat\vvphi_2\circ \widehat\vvphi_1$. It is easy to see that the function $\widehat\vvphi$ has all the desired properties (H1)-(H3) with the constant $c_0=200(2+2c_1)$.

We now proceed with the proof of Sublemma~\ref{SLsquare to cross}. Set $c=c_0^2$.
For any $\gamma>10$, the above claim yields a homeomorphism 
$\widehat\vvphi=\widehat\vvphi_\gamma$ from $[0, 1]^2$ onto 
$[0, 2+2\gamma]\times [0, 1]$ having Properties (H1)-(H3).
We then attach two rectangular wings to the rectangle $[0, 2+2\gamma]\times [0, 1]$ as follows, see Figure~\ref{fig:varphi_wing}.

\begin{figure}[ht]
 \centering 
  \includegraphics[width=\textwidth]{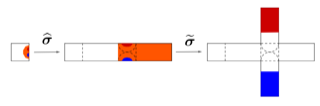} 
  \caption{Attaching rectangular wings by $\widetilde\vvphi$ }
\label{fig:varphi_wing}  
\end{figure}

Take another homeomorphism $\widehat\vvphi'=\widehat\vvphi_{\frac{\gamma-1}{2}}$
from $[0, 1]^2$ onto $[0, 1+\gamma]\times [0, 1]$ having Properties (H1)-(H3). Note that there is a unique planar isometry $\eta_\pm: \RR^2\to \RR^2$ which maps 
$[\gamma, 1+\gamma]\times [0, 1]$ onto $[0, 1]^2$ such that 
$\eta_+(\gamma, 0)=(0, 1)$ and $\eta_+(\gamma, 1)=(1, 1)$, while 
$\eta_-(\gamma, 0)=(1, 0)$ and $\eta_-(\gamma, 1)=(0, 0)$. We then define two homeomorphisms 
$\widetilde\vvphi_+: [\gamma, 1+\gamma ]\times [0, 1] \to [\gamma, 1+\gamma ]\times [0, 1+\gamma]$ and 
$\widetilde\vvphi_-: [\gamma, 1+\gamma ]\times [0, 1] \to [\gamma, 1+\gamma ]\times [-\gamma, 1]$,
which are given by 
$\widetilde\vvphi_\pm=\eta_\pm^{-1}\circ \widehat\vvphi' \circ \eta_\pm$. Further, we define a homeomorphism 
$\widetilde\vvphi: [0, 2+2\gamma ]\times [0, 1] \to \mathcal{C}_\gamma$ by
$$
\widetilde\vvphi(x)=
\begin{cases}
\widetilde\vvphi_+\circ \widetilde\vvphi_-(x), \ & \ x\in  [\gamma, 1+\gamma ]\times [0, 1], \\
x, \ & \ \text{elsewhere}.
\end{cases}
$$
Finally, we take $\vvphi_\gamma=\widetilde\vvphi \circ \widehat\vvphi$, which obviously satisfies Statements (1) and (2) of Lemma~\ref{SLsquare to cross}. It remains to show Statement (3) holds for $\vvphi_\gamma$. Note that 
$$
\widetilde\vvphi^{-1}|{\CCC_\gamma^\pm}
=\widetilde\vvphi_\pm^{-1}|{\CCC_\gamma^\pm}
=\eta_\pm^{-1} \circ \left(\widehat\vvphi'\right)^{-1}|{[(1+\gamma)/2, 1+\gamma]\times [0, 1]} \circ \eta_\pm
$$
is Lipschitz with the Lipschitz constant less than 
$c_0\left(\frac{\gamma-1}{2}\right)^{-1}<c_0$. Moreover, 
$\widetilde\vvphi^{-1}(\CCC_\gamma^\pm)$ is a subset of 
$[\gamma, 1+\gamma ]\times [0, 1]$, on which $\widehat\vvphi^{-1}$ is  Lipschitz with the Lipschitz constant less than $c_0\gamma^{-1}$. Therefore, 
$\vvphi_\gamma^{-1}=\widehat\vvphi^{-1} \circ \widetilde\vvphi^{-1}$ is Lipschitz on 
$\CCC_\gamma^\pm$ with the Lipschitz constant less than 
$c_0^2\gamma^{-1}=c\gamma^{-1}$. The proof of Sublemma~\ref{SLsquare to cross} is complete. 
\end{proof}

\section{Proof of Theorems C and B}\label{SThm C and B}

An important ingredient of our proof of Theorem C is the Katok map 
$g: \overline{\DD^2}\to\overline{\DD^2}$ constructed in \cite{Katok79}. We summarize its properties in the following statement.

\begin{proposition}\label{PropKMap}
There is a $C^\infty$ area preserving diffeomorphism $g:\DD^2\to\bar{ \DD^2}$ which has the following properties:
\begin{enumerate}
\item $g$ is ergodic and in fact, is isomorphic to a Bernoulli map;
\item $g$ has non-zero Lyapunov exponents almost everywhere;
\item there is a neighborhood $\CN$ of $\p \DD^2$ and a smooth vector field 
$Z$ on $\CN$ such that $g$ is the time-$1$ map of the flow generated by $Z$;
\item the map $g$ can be constructed to be arbitrarily flat near the boundary of the disk; more precisely, given any sequence of positive numbers $\rho_n\to 0$ and any sequence of decreasing neighborhoods $\CN_n$ of $\p \DD^2$ satisfying 
\begin{equation}\label{slow down}
\CN_{n}\subset\overline\CN_{n}\subset\CN_{n+1} \text{ and }
\bigcap_{n\ge 1}\CN_n= \p \DD^2,
\end{equation}
one can construct a $C^\infty$ area preserving diffeomorphism $g$ of $\DD^2$ which has Properties (1)-(3) of the proposition and such that  
\begin{equation}\label{fThmC2}
\CN_{n-1}\subset g(\CN_n)\subset \CN_{n+1}
\ \ \text{and} \ \  
\|g - \id \|_{C^n(\CN_n)} \le \rho_n.
\end{equation}
\end{enumerate}
\end{proposition}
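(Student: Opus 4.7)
Properties (1)--(3) are precisely the content of Katok's construction in \cite{Katok79}, so the plan is to take them as a black box and focus on the quantitative flatness statement (4), which is the part relevant to the rest of the paper.

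The key observation is that Katok's construction contains a free parameter in the form of a slow-down function. Near the boundary, the map has the form $g(x) = \Phi_{\chi(r(x))}(x)$, where $r(x) = d(x, \p\DD^2)$, $\Phi_s$ is the time-$s$ map of the local hyperbolic Hamiltonian flow generated by a vector field $Z$, and $\chi: [0, r_0] \to [0, \infty)$ is any $C^\infty$ function that vanishes together with all its derivatives at $0$ and is strictly positive on $(0, r_0]$. The verification of (1)--(3) in \cite{Katok79} is insensitive to the precise choice of $\chi$: ergodicity, the Bernoulli property, and nonvanishing Lyapunov exponents follow from Pesin theory as soon as $\chi$ is flat only at $0$. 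Thus I can prescribe $\chi$ however I wish, subject to flatness, and (1)--(3) come for free, together with the vector field $Z$ in (3) which is built into the slow-down.

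For (4), I plan to design $\chi$ inductively using the sequences $(\CN_n)$ and $(\rho_n)$. Let $r_n := \sup\{r : \{r(\cdot) < r\} \subset \CN_n\}$, so that $r_n \downarrow 0$ by \eqref{slow down}. On each dyadic ring $\{r_{n+1} \le r(x) \le r_n\}$ prescribe $\chi$ together with its first $n$ derivatives to be smaller than a quantity $\eta_n$ chosen to meet two demands simultaneously. The first is the displacement bound $|\chi(r) Z(x)| < \tfrac12 \min\{d(\p \CN_{n-1}, \p\CN_n),\, d(\p \CN_n, \p\CN_{n+1})\}$, which after integrating the flow yields the inclusion $\CN_{n-1} \subset g(\CN_n) \subset \CN_{n+1}$. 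The second is that a Fa\`a di Bruno expansion of $g - \id = \int_0^{\chi(r(x))} Z \circ \Phi_s(x)\, ds$ controls the $C^n$ norm of $g - \id$ by the first $n$ derivatives of $\chi$ together with the uniformly bounded derivatives of $Z$, so choosing $\eta_n$ small enough forces $\|g - \id\|_{C^n(\CN_n)} \le \rho_n$. Standard mollification assembles these dyadic pieces into a single $C^\infty$ function flat at $0$.

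The main obstacle is essentially bookkeeping: one must satisfy the invariance and $C^n$-smallness requirements simultaneously at each level $n$, and one must verify that the assembled $\chi$ is still $C^\infty$-flat at $0$ after gluing. Both are routine once the quantifiers are unwound. The dynamical substance of (1)--(3) is not redone; it is isolated to the Pesin-theoretic arguments already carried out in \cite{Katok79}.
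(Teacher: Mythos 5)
The paper offers no proof of this proposition at all: it is stated as a summary of Katok's construction in \cite{Katok79}, with the flatness refinement in item (4) imported from the authors' earlier work (\cite{HPT13}, \cite{HPT04}). So your overall strategy --- treat (1)--(3) as Katok's theorem and obtain (4) by tuning the slow-down --- is the right one and matches what the paper implicitly relies on. The problem is with the specific mechanism you propose for (4).

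Your local model $g(x)=\Phi_{\chi(r(x))}(x)$, with $\chi$ a freely prescribable function that is $C^\infty$-flat at $0$, and the claim that ``(1)--(3) come for free'' for any such flat $\chi$, do not survive contact with the actual construction. In Katok's construction the slow-down function $\psi$ is applied at the fixed point of the toral automorphism and must satisfy the integrability condition $\int_0^1 \psi(u)^{-1}\,du<\infty$: the slowed-down map preserves a measure with density $\psi^{-1}$ near the fixed point, and this condition is exactly what makes that measure finite, which is what permits the measure-rectifying coordinate change producing an \emph{area-preserving} diffeomorphism of the disk and what keeps the Lyapunov exponents nonzero almost everywhere. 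A slow-down that is $C^\infty$-flat at $0$ violates this condition (flatness forces $\psi(u)=o(u)$, hence $\int_0^1\psi(u)^{-1}\,du=\infty$), so your two demands --- flatness of the time-rescaling and retention of properties (1)--(2) --- pull in opposite directions and cannot both be met by choosing $\chi$ flat. Relatedly, the variable-time map $x\mapsto\Phi_{\chi(r(x))}(x)$ of an area-preserving flow is not area-preserving unless $r$ is a first integral of $\Phi$, which it is not for a hyperbolic flow; the time-$1$ map of $\chi(r)Z$ preserves $\chi(r)^{-1}\,dA$, not $dA$, which is precisely why the rectifying coordinate change is needed. In the actual argument the flatness of $g-\id$ at $\p\DD^2$ is an emergent effect of that coordinate change (whose derivatives degenerate at the blown-up fixed point) combined with a suitably chosen \emph{non-flat, integrable} $\psi$; verifying \eqref{fThmC2} for prescribed $\CN_n$ and $\rho_n$ requires estimating $g-\id$ in those coordinates as a function of $\psi$, which is the nontrivial computation carried out in \cite{HPT13}, not the routine bookkeeping your sketch describes.
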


\begin{proof}[Proof of Theorem C]
Let $h: \DD^2\to \TT^2$ be the map and $U$ the open dense set both constructed in Proposition~\ref{PropMain}.

Set $V_n=\{x\in U: \dist(x, \p U)<\frac{1}{n}\}$ and $\CN_n=h^{-1}(V_n)$. By Proposition~\ref{PropMain}, $\CN_n$ is a neighborhood of $\p\DD^2$.  

Given a $C^\infty$ map $g:\DD^2\to\DD^2$, choose a number $K_n>0$ such that
\begin{equation}\label{fThmC1}
\begin{aligned}
\| h\circ &\left(g- \id\right)\circ h^{-1}\|_{C^n(V_n\backslash V_{n+1})} \\
\le & K_n \|h\|_{C^n(V_{n-1}\backslash V_{n+2})} 
\|g - \id \|_{C^n(\CN_n)} \|h^{-1} \|_{C^n(V_n\backslash V_{n+1})}.
\end{aligned}
\end{equation}
Fix $\lambda\in (0, 1)$ and let 
\[
\rho_n=\lambda^n/\left( K_n \|h\|_{C^n(V_{n-1}\backslash V_{n+2})} \|h^{-1} \|_{C^n(V_n\backslash V_{n+1})} \right).
\]
Let $g$ be the $C^\infty$ map constructed in Proposition \ref{PropKMap}. Using  \eqref{fThmC2}, we obtain that the map $f: \TT^2\to \TT^2$ given by 
$$
f(x)=
\begin{cases}
(h\circ g \circ h^{-1})(x),  & x\in U; \\
\id, & \text{elsewhere}.
\end{cases}
$$
is well defined. It follows from \eqref{fThmC1} that $\|f-\id\|_{C^n(V_n)}\le \lambda^n$. This implies that $f$ is $C^\infty$-tangent to $\id$ near $\p U$. It is obvious that $f$ satisfies all other requirements of Theorem~C.
\end{proof}

To prove Theorem B we also need a result from \cite{HPT04} (see Proposition 4) showing that there is a smooth isotopy connecting the identity map and the Katok map.  

\begin{proposition}\label{PropKIso}
Let $g: \DD^2\to\DD^2$ be a map given in Proposition \ref{PropKMap}. Then there is a $C^\infty$ map $G: \oDD^2\times [0,1]\to\oDD^2$ such that
\begin{enumerate}
\item for any $t\in [0,1]$ the map $g_t=G(\cdot, t): \oDD^2\to\oDD^2$ is an area-preserving diffeomorphism;
\item $g_0=\id$ and $g_1=g$;
\item $d^n G(x,1)=d^n G(g(x),0)$ for any $n\ge 0$;
\item in a neighborhood $\CN$ of $\p \DD^2$, $g_t|\CN$ is the flow generated by $Z$;
\item the maps $g_t$ are arbitrarily flat near $\p\DD^2$; more precisely, given any sequence of positive numbers $\rho_n\to 0$ and any sequence of decreasing neighborhoods $\CN_n$ of $\p \DD^2$ satisfying \eqref{slow down}, one has that for any $t\in[0,1]$ the map $g=g_t$ satisfies \eqref{fThmC2}. 
\end{enumerate}
\end{proposition}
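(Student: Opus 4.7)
The plan is to realize the isotopy $G(x,t) = g_t(x)$ as the flow of a smooth time-dependent vector field $X_t$ on $\DD^2$ whose time-$1$ map is $g$, chosen so that (i) $X_t|\CN = Z$ for every $t \in [0,1]$, where $Z$ is the vector field from Proposition~\ref{PropKMap}(3), and (ii) $X_t$ extends smoothly and periodically in $t$ with period $1$. Property (i) immediately gives condition (4), since $g_t|\CN$ is then the time-$t$ flow of $Z$. Property (ii), via the flow identity $g_{1+s} = g_s \circ g$, yields the jet-matching condition (3) under the natural mapping-torus identification of $(x,1)$ with $(g(x),0)$: the $t$-derivatives at the two points are determined by the common values of $X_t$ and its derivatives at the common endpoint, and the spatial derivatives match through the natural tangent identification by $dg$.

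To construct such an $X_t$, I would first produce any area-preserving $C^\infty$ isotopy $\bar g_s$ from $\id$ to $g$ on $\DD^2$. The existence of such an isotopy is classical: the group of $C^\infty$ area-preserving diffeomorphisms of $\DD^2$ that equal $g$ near $\p\DD^2$ is path-connected to the one containing $\id$, by a combination of Smale's contractibility theorem for $\mathrm{Diff}(\DD^2)$ and Moser's deformation of volume forms. Because $\DD^2$ is simply connected, the isotopy is automatically Hamiltonian and is generated by some time-dependent vector field $\bar X_t$. I then arrange $\bar X_t = Z$ on $\CN$ for all $t$ by gluing the constant path $t \mapsto Z$ on $\CN$ to the chosen interior Hamiltonian family via a cutoff, correcting any small area-preservation mismatch in the transition annulus by a local Moser adjustment, which is harmless thanks to the flatness of $g$ near $\p\DD^2$.

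Next, to enforce the periodicity (ii), I would modify $\bar X_t$ only in the interior of $\DD^2$ by subtracting a compactly-supported Hamiltonian loop correction that kills the periodicity defect $\bar X_1 - \bar X_0$ without changing the time-$1$ map; this is possible because the identity component of the group of compactly-supported Hamiltonian diffeomorphisms of the open disk is path-connected. The flatness condition (5) is then automatic from the flatness freedom in Proposition~\ref{PropKMap}(4): flatness of $g$ in $C^n$ on $\CN_n$ forces $Z$ to be equally flat, and the interior correction can be made equally flat by a partition-of-unity argument, yielding the estimates \eqref{fThmC2} for every $g_t$. The main obstacle is the simultaneous enforcement of (i) and (ii) under the area-preservation constraint, i.e., prescribing the generator on $\CN$ and in the interior at $t=0,1$ while still realizing $g$ at time $1$; this is manageable because of the topological simplicity of the disk, which makes the relevant cohomological obstructions vanish.
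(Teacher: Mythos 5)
First, a point of reference: the paper does not prove this proposition at all --- it is quoted verbatim from \cite{HPT04} (Proposition 4 there), so your attempt is being measured against a citation rather than an in-text argument. Your overall architecture is the right one and matches the spirit of the cited construction: generate $g_t$ by a time-dependent divergence-free field $X_t$ with $X_t=Z$ on a collar of $\p\DD^2$ (giving (4)) and with $X_t$ extending $1$-periodically in $t$ (giving $g_{1+s}=g_s\circ g$ and hence the jet-matching (3)), while connectedness of the area-preserving diffeomorphism group of the disk supplies (1)--(2).

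The genuine gap is in the middle: every one of your corrective operations --- cutting off $\bar X_t$ against $Z$ on the collar, the ``local Moser adjustment'' in the transition annulus, and the ``Hamiltonian loop correction'' for periodicity --- changes the time-$1$ map of the resulting field, and you never explain why the end result still has time-$1$ map exactly $g$. As written, the construction produces \emph{some} area-preserving isotopy with the right boundary and periodicity behavior, but not one ending at $g$. The repair has to use Proposition~\ref{PropKMap}(3) structurally rather than perturbatively: since $Z$ is divergence-free on the collar and tangent to $\p\DD^2$, it has zero flux and extends to a global divergence-free field $\hat Z$ on $\oDD^2$ with flow $\hat\phi_t$; then $k:=\hat\phi_1^{-1}\circ g$ equals the identity near $\p\DD^2$, hence is a compactly supported area-preserving (automatically Hamiltonian, as $H^1_c$ of the open disk vanishes) diffeomorphism of the interior, and one joins it to $\id$ by a compactly supported isotopy $k_t$ whose generator vanishes to infinite order at $t=0,1$. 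Setting $g_t=\hat\phi_t\circ k_t$ gives a generator $X_t=\hat Z+(\hat\phi_t)_*Y_t$ that equals $\hat Z$ near $t=0$ and $t=1$ (hence is smoothly $1$-periodic) and equals $Z$ on a collar disjoint from $\bigcup_t\hat\phi_t(\operatorname{supp}k_t)$, with $g_1=g$ exactly. Note that the naive concatenation in the other order, $g_t=k_t\circ\hat\phi_t$, fails (3): its generator near $t=1$ is $k_*\hat Z\neq\hat Z$, so the endpoint jets do not match --- this shows (3) is a real constraint that your ``loop correction'' must actually be shown to satisfy. A secondary soft spot: your claim that flatness of $g$ ``forces $Z$ to be equally flat'' is backwards (a time-$1$ map near the identity does not control its generator); rather, in the Katok construction one \emph{chooses} $Z$ arbitrarily flat, and then $g_t|\CN=\hat\phi_t|\CN$ inherits the estimates \eqref{fThmC2} uniformly in $t$, which is what (5) requires.
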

To prove Theorem~B we start with the smooth isotopy $G(x,t)$ from the above proposition and use the conjugacy map $h$ from Proposition~\ref{PropMain} to get a smooth isotopy $F(x,t)$ on $\TT^2$ that connects the identity map with the map $f_1$ constructed in Theorem~C. We then use this isotopy to define a flow on $\TT^3$ that exhibits the essential coexistence phenomena. Finitely, we make a time change in this flow to obtain a new flow which is ergodic and in fact, is a Bernoulli flow.

\begin{proof}[Proof of Theorem B]
Let $G: \oDD^2\times[0,1]\to \oDD^2$ be the smooth isotopy constructed 
in Proposition~\ref{PropKIso}, $g_t=G(\cdot, t)$, and $h: \DD^2\to U$ be the diffeomorphism constructed in Proposition~\ref{PropMain}.

Choose $V_n$, $\CN_n$ and $\rho_n$ in the same way as in the proof of Theorem~C, and choose $g_t$ such that for all $n\ge 1$ and $t\in [0,1]$ the map $g=g_t$ satisfies \eqref{fThmC2}.

Then we define $F:\TT^3\to \TT^2$ by
$$
F(x, t)=
\begin{cases}
h\circ G(h^{-1}(x), t),  & x\in U, \\
\id, & \text{elsewhere}.
\end{cases}
$$
and denote 
\begin{equation}\label{f-t}
f_t=F(\cdot, t)=h\circ g_t\circ h^{-1}.
\end{equation} 
Similarly, we have 
$$
\sup_{t\in [0, 1]}\| F(\cdot, t)-\id \|_{C^n(V_n)}\le \lambda^n,
$$
which implies that $f_t$ is $C^\infty$-tangent to $\id$ near $\p U$ for each $t\in [0,1]$. Also, by Proposition~\ref{PropKIso}(3), we have 
$d^n F(x,1)=d^n F(f_1(x),0)$ for any $n\ge 0$. In particular, 
\begin{equation}\label{fPropertyF}
F(x,1)=F(f_1(x),0). 
\end{equation}
Next, we use $F$ to define a map on $\TT^3$ and then define a vector field $\hX$.

Let $\CK$ be the suspension manifold over $f_1$, i.e.,
\begin{equation*}
\CK=\{(x, \theta)\in \TT^2\times [0,1]:\ (x,1)\sim (f_1(x),0)\}.
\end{equation*}
The suspension flow $\hf^t: \CK\to \CK$ is generated by the vertical vector field 
$\frac{\p}{\p \theta}$. Note that the restriction of $\hf^t$ on the set 
$\{(x,0)\in U\times [0,1]: (x,1)\sim (f_1(x),0)\}$ has non-zero Lyapunov exponents 
almost everywhere (except along the flow direction), while $\hf^t|{\p U\times \TT}=\id$ has all zero Lyapunov exponents.

We view the $3$-torus as
\begin{equation*}
\TT^3=\{(x, \theta)\in \TT^2\times [0,1]:\ (x,1)\sim (x,0)\}.
\end{equation*}
The map $\hF: \CK\to \TT^3$, given by $\hF(x, \theta)=(F(x, \theta), \theta)$, is well defined, since by \eqref{fPropertyF}, 
$$
\begin{aligned}
\hF(x, 1)&=(F(x, 1), 1)=(F(f_1(x), 0), 1)\\
&=(F(f_1(x), 0), 0)=\hF(f_1(x), 0).
\end{aligned}
$$
Furthermore, $\hF$ is a $C^\infty$ diffeomorphism and $\hF|{\p U\times \TT}=\id$. We now define a $C^\infty$ vector field on $\TT^3$ by $\hX=\hF_*\frac{\p}{\p\theta}$, that is, 
\begin{equation*}
\hX(x, \theta)=:\left(X_1(x_1, x_2,\theta), X_2(x_1, x_2,\theta),1\right).
\end{equation*}
It is clear that $\hX|{\p U\times \TT}=\frac{\p}{\p\theta}$. Since for each $t\in [0,1]$ the map \eqref{f-t} is area reserving, it is easy to see that $\hX$ is divergence-free along 
$\TT^2$-direction, i.e., for any fixed $\theta\in \TT$,
\begin{equation}\label{divergence free}
\dfrac{\p}{\p x_1} X_1(x_1,x_2,\theta)+\dfrac{\p}{\p x_2} X_2(x_1,x_2,\theta)=0.
\end{equation}
Finally, we define a $C^\infty$ vector field $X$ on $\TT^3$ by modifying the last component of $\hX$ as follows. For any sufficiently small $\ve>0$, we let 
\begin{equation}\label{def X tau}
X(x, \theta):=\left(X_1(x_1, x_2,\theta), X_2(x_1, x_2,\theta), \tau(x_1, x_2) \right),
\end{equation}
where $\tau: \TT^2\to [1, 1+\ve]$ is a $C^\infty$ function such that 
\begin{enumerate}
\item[(A1)]
$\|\tau-1\|_{C^1}\le \ve$;
\item[(A2)] 
$\tau$ is a constant greater that $1$ on the closure of some open subset of $\TT^2$;
\item[(A3)]
$\tau=1$ on a neighborhood of $\p U$.
\end{enumerate}
By \eqref{divergence free}, the vector field $X$ is divergence-free along $\TT^2$-direction.
It is also easy to see that $X=\frac{\p}{\p \theta}$ on $\p U\times \TT$.

We denote by $f^t: \TT^3\to \TT^3$ the flow generated by $X$.
Following the arguments in \cite{HPT04}, it is not hard to show that 
$f^t|{U\times \TT}$ is a Bernoulli flow with non-zero Lyapunov exponents 
almost everywhere (except along the flow direction), 
while $f^t|{\p U\times \TT}=\id$ has all zero Lyapunov exponents.
\end{proof}

\section{The Hamiltonian flow: Proof of Theorem~A}\label{SThm A}

We shall prove that the vector field $X=(X_1, X_2, \tau)$ obtained in
Theorem~B can be embedded as a Hamiltonian vector field in the 
$4$-dimensional manifold $\CM=\TT^3\times \RR$.

Let $X=(X_1, X_2, \tau)$ be the vector field given in Theorem~B that
generates the flow $f^t$ in $\TT^3$.

Let $\Theta=\Theta(x_1, x_2, \theta)$ be the backward hitting time to the zero level for the 
flow $f^t$ initiated at $(x_1, x_2, \theta)\in \TT^3$, that is, 
there is a unique point $(\widehat{x}_1, \widehat{x}_2)$ and a unique value 
$\Theta\in [0, 1)\cong \TT$ such that $f^\Theta(\widehat{x}_1, \widehat{x}_2, 0)=(x_1, x_2, \theta)$. 
It is easy to see that the function $\Theta$ is $C^\infty$ smooth on $\TT^3$ and satisfies
\begin{equation}\label{speed change}
X_1 \frac{\p \Theta}{\p x_1} + X_2 \frac{\p \Theta}{\p x_2} + \tau \frac{\p \Theta}{\p \theta} =1.
\end{equation}
Indeed, let $\gamma(t)=f^t(\widehat{x}_1, \widehat{x}_2, 0)$ for $t\in [0, \Theta]$, we have 
$\Theta(\gamma(t))=t$. Taking derivative $\frac{d}{dt}$ on both sides and then letting $t=\Theta$, we obtain 
\eqref{speed change}.

We now consider the 4-dimensional manifold $\CM=\TT^3\times \RR$,
endowed with the standard symplectic form $\omega=dx_1\wedge dx_2 + d\theta\wedge dI$.
The diffeomorphism $\Phi: \CM\to \CM$ given by $\Phi(x_1, x_2, \theta, I)=(x_1, x_2, \Theta, I)$
pulls $\omega$ back to a closed 2-form:
\begin{equation}\label{non-standard symp}
\begin{split}
\homega=&\Phi^*\omega=dx_1\wedge dx_2 + d\Theta\wedge dI \\
=& dx_1\wedge dx_2 + \frac{\p \Theta}{\p x_1}  dx_1 \wedge dI
+\frac{\p \Theta}{\p x_2}  dx_2 \wedge dI +\frac{\p \Theta}{\p \theta}  d\theta \wedge dI.
\end{split}
\end{equation}
We may further assume that $\homega$ is non-degenerate, 
since $\|\homega-\omega\|_{C^0}\le \|\Theta -\theta\|_{C^1}$ could be sufficiently small if the function $\tau$ 
in \eqref{def X tau} is chosen such that $\|\tau-1\|_{C^1}$ is sufficiently small. 
Therefore, $\homega$ is also a symplectic form on $\CM$.

Let $\wH=\wH(x_1, x_2, \theta): \TT^3\to \RR$ be a function which for each 
$\theta\in\TT^2$ is a solution of the following system of equations
\begin{equation}\label{Hamiltonian PDE}
\begin{cases} 
\dfrac{\p \wH}{\p x_2}=X_1, \ -\dfrac{\p \wH}{\p x_1}=X_2, & \ \text{on} \ U, \\
\\
\wH=0 \ & \ \text{on}\ \p U,
\end{cases}
\end{equation}
whose existence is proved in Lemma~\ref{LtildeH} below.
Define $\hH: \CM\to \RR$ by 
\begin{equation}\label{fdef Hamiltonian}
\hH(x_1, x_2, \theta, I)=\wH(x_1, x_2, \Theta(x_1, x_2, \theta)) + I.
\end{equation}
Let $X_{\hH}=(X_1, X_2, \tau, v)$ where 
$$
v(x_1, x_2, \theta, I)
:=\frac{\p \wH}{\p \theta}\left(x_1, x_2, \Theta(x_1, x_2, \theta) \right).
$$
By Lemma~\ref{LhatH} below, $X_{\hH}$ is a Hamiltonian vector field on $\CM$ for the Hamiltonian function $\hH$ with respect to the non-standard symplectic form $\homega$.

Given any $e\in \RR$, the corresponding energy surface is given by
$$
\widehat\CM_e=\{\hH=e\}=\left\{(x_1, x_2, \theta, I)\in \CM:\ I=e-\wH(x_1, x_2, \Theta(x_1, x_2, \theta))\right\}.
$$
Define a $C^\infty$ diffeomorphism $\widehat\Psi_e: \TT^3\to \widehat\CM_e$ by 
$$
\widehat\Psi_e(x_1, x_2, \theta)=(x_1, x_2, \theta, e-\wH(x_1, x_2, \Theta(x_1, x_2, \theta))).
$$
It follows from \eqref{speed change} that $\left(\widehat\Psi_e\right)_*X=X_{\hH}$, and hence, the Hamiltonian flow restricted on $\widehat\CM_e$ (under the non-standard symplectic form $\homega$) is $\widehat\Psi_e\circ f^t\circ \widehat\Psi_e^{-1}$. 

Finally, setting $H=\Phi_*\hH=\hH\circ \Phi^{-1}$ and $X_H=\Phi_*X_{\hH}$, we have that
\begin{equation*}
\omega(X_H, \cdot)=\Phi_*\homega(\Phi_*X_{\hH}, \cdot)
=\Phi_*\left[ \homega(X_{\hH}, \cdot) \right]=\Phi_*d\hH=d\Phi_*\hH=dH.
\end{equation*}
This means that $X_H$ is a Hamiltonian vector field on $\CM$ for the Hamiltonian function $H$ with respect to the standard symplectic form $\omega$. For any $e\in \RR$, the corresponding energy surface is given by
$$
\CM_e=\{H=e\}=\Phi\{\hH=e\}=\Phi\widehat\CM_e.
$$
Furthermore, we define $\Psi_e=\Phi\circ \widehat\Psi_e: \TT^3\to \CM_e$, then
it is clear that $\left(\Psi_e\right)_*X=X_{H}$, and hence, 
the Hamiltonian flow restricted to $\CM_e$ (under the symplectic form $\omega$) is $\Psi_e\circ f^t\circ \Psi_e^{-1}$. This completes the proof of Theorem A subject to the two technical result that we now present.

\begin{lemma}\label{LtildeH}
The system \eqref{Hamiltonian PDE} has a solution $\wH: \TT^3\to \RR$.
\end{lemma}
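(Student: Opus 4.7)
The plan is to construct $\wH$ as the pull-back, via the area-preserving embedding $h:\DD^2\to U$ from Proposition~\ref{PropMain}, of a Hamiltonian for the isotopy $g_\theta$ on the disk, then extend by zero to $\p U\times \TT$. The divergence-free condition \eqref{divergence free} is precisely the integrability condition that guarantees a stream function on each $\theta$-slice of $U$; the real content of the lemma is producing a consistent $C^\infty$ choice that vanishes on $\p U$. A direct approach solving the PDE on $U$ alone is problematic because the complicated components of $\p U$ (a fat Cantor set plus countably many segments) make it unclear a priori that the boundary limits of a stream function are simultaneously zero. Going through the disk $\DD^2$, whose boundary is a single connected circle, sidesteps this issue.

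Let $V_\theta:=(\p g_\theta/\p\theta)\circ g_\theta^{-1}$ be the Eulerian velocity of the isotopy from Proposition~\ref{PropKIso}; this is a $C^\infty$ divergence-free vector field on $\oDD^2$ depending smoothly on $\theta\in\TT$, where smoothness across $\theta=0\sim 1$ comes from the matching condition in Proposition~\ref{PropKIso}(3). Since $\oDD^2$ is simply connected, $V_\theta=X_{\tilde H_\theta}$ for some $\tilde H\in C^\infty(\oDD^2\times\TT)$, unique up to an additive function of $\theta$. Because $g_\theta$ is $C^\infty$-flat to the identity on $\p\DD^2$ by Proposition~\ref{PropKIso}(5), $V_\theta$ and all its derivatives vanish on $\p\DD^2\times\TT$; hence $\tilde H_\theta$ is constant on the connected circle $\p\DD^2$ for each $\theta$, and choosing the additive constant to kill this value gives $\tilde H\equiv 0$ on $\p\DD^2\times\TT$, still $C^\infty$-flat there. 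On $U\times\TT$ I would then set $\wH(x,\theta):=\tilde H(h^{-1}(x),\theta)$. Unpacking $\hX=\hF_*\p/\p\theta$ from the proof of Theorem~B shows $(X_1,X_2)(y,\theta)=dh(h^{-1}(y))\cdot V_\theta(h^{-1}(y))=(h_*V_\theta)(y)$ on $U\times\TT$; since $h$ is area-preserving, $h_*V_\theta=X_{\tilde H_\theta\circ h^{-1}}$, so $\wH$ satisfies $\p_{x_2}\wH=X_1$ and $-\p_{x_1}\wH=X_2$ on $U\times\TT$.

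To finish, extend by $\wH\equiv 0$ on $\p U\times\TT$ and verify $\wH\in C^\infty(\TT^3)$. Continuity at $\p U\times\TT$ is automatic: $h$ is a proper $C^\infty$ diffeomorphism $\DD^2\to U$, so $h^{-1}(y)\to\p\DD^2$ as $y\to\p U$ from within $U$, and $\tilde H$ vanishes on $\p\DD^2\times\TT$. For $C^\infty$-flatness across $\p U\times\TT$, observe that every mixed partial derivative of $\wH$ on $U\times\TT$ is expressible either as a polynomial in derivatives of $X_1,X_2$ (for pure $x$-derivatives, by induction from $\p_{x_2}\wH=X_1$ and $-\p_{x_1}\wH=X_2$) or as $\p_\theta^k\tilde H\circ h^{-1}$ followed by $x$-derivatives of the same form (by interchanging the order of differentiation). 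By the construction in Theorem~B, $X_1,X_2$ are $C^\infty$-flat on $\p U\times\TT$ (because $f_t$ is $C^\infty$-tangent to the identity there for each $t$), and $\p_\theta^k\tilde H$ is $C^\infty$-flat on $\p\DD^2\times\TT$. Hence every derivative of $\wH$ extends continuously to zero at $\p U\times\TT$, so the zero extension is $C^\infty$ on $\TT^3$ and solves \eqref{Hamiltonian PDE}.

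The main obstacle is the last smoothness check across the fat boundary $\p U$. The whole construction rides on the $C^\infty$-flatness of $X_1,X_2$ on $\p U\times\TT$, which in turn is bought by the freedom to make the Katok isotopy $g_\theta$ arbitrarily flat at $\p\DD^2$ in Proposition~\ref{PropKIso}(5); without that flatness, no choice of additive constants in $\tilde H_\theta$ could smoothly reconcile the stream function on $U$ with the zero extension on the measure-theoretically substantial complement.
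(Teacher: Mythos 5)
Your route is genuinely different from the paper's, and it is harder than it needs to be. The paper never passes through the disk: it observes that $X_1,X_2$ are already $C^\infty$ on all of $\TT^3$, vanish on $\p U\times\TT$ (in particular on the lifted integer grid lines, which lie in $\p U$), and satisfy \eqref{divergence free} everywhere by density, so for each fixed $\theta$ the $1$-form $X_2\,dx_1-X_1\,dx_2$ is a globally smooth closed form on $\RR^2$. The Poincar\'e lemma produces $\wH$ as a path integral from the origin, and periodicity is checked with Stokes' theorem using the vanishing of the form on the grid. The resulting function is smooth on all of $\TT^3$ by construction, so the pathological behavior of $h$ and $h^{-1}$ near $\p\DD^2$ and the fat Cantor set never enters. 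Your approach instead builds the stream function on $\oDD^2$ and must transport it across $\p U$, which is precisely where all the difficulty concentrates; the identities $(X_1,X_2)=h_*V_\theta$ and $h_*X_{\tilde H_\theta}=X_{\tilde H_\theta\circ h^{-1}}$ are correct, but they shift the whole burden onto the boundary-regularity step that you yourself flag as the main obstacle.

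That step contains two genuine gaps. First, you need $X_1,X_2$ to be $C^\infty$-flat on $\p U\times\TT$ and justify this by the $C^\infty$-tangency of the maps $f_t$ to the identity. But $(X_1,X_2)(y,\theta)=\p_\theta F(x,\theta)$ evaluated at $x=f_\theta^{-1}(y)$ involves the $\theta$-derivative of the isotopy, whereas the estimates $\sup_t\|F(\cdot,t)-\id\|_{C^n(V_n)}\le\lambda^n$ control only $x$-derivatives of each time-$t$ map; flatness of the generator is a joint statement in $(x,\theta)$ that must be extracted separately from Proposition~\ref{PropKIso}(5). (The paper's proof needs no flatness at all, only smoothness of $X$ on $\TT^3$ and its vanishing on $\p U\times\TT$.) Second, even granting that every partial derivative of $\wH$ extends continuously by zero to $\p U\times\TT$, the conclusion that the zero extension is $C^\infty$ is a Whitney-type assertion that requires proof across a boundary of positive measure: differentiability at $p\in\p U$ with vanishing derivative needs an argument such as restricting to the segment from $p$ to a nearby point, locating the last point of that segment lying in $U^c$, and applying the fundamental theorem of calculus on the remaining subinterval contained in $U$, then inducting on the order of the derivative. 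Both gaps are fixable, but together they constitute essentially the entire content of the lemma in your formulation, and neither is addressed beyond assertion.
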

\begin{proof}[Proof of the lemma]
Recall that 
$$
X(x_1, x_2, \theta)=\left( X_1(x_1, x_2, \theta), X_2(x_1, x_2, \theta), \tau(x_1, x_2) \right)
$$
is a $C^\infty$ vector field on $\TT^3$ such that $X|{\partial U\times \TT}=\frac{\partial}{\partial \theta}$, which means that
$$
X_1(x_1, x_2, \theta)=X_2(x_1, x_2, \theta)=0 \ \ \text{for any} \ (x_1, x_2)\in \partial U \ \text{and} \ \theta\in \TT.
$$
Moreover, $X$ is divergence-free along $\TT^2$-direction, i.e., \eqref{divergence free} holds for any $\theta\in \TT$.

Now we extend the domains of $X_1$ and $X_2$ onto $\RR^3$ by periodicity, that is, we set
$$
X_i(\widetilde{x}_1, \widetilde{x}_2, \widetilde{\theta}) = X_i(x_1, x_2, \theta), \ \ i=1, 2,
$$
for any $(\widetilde{x}_1, \widetilde{x}_2, \widetilde{\theta})\in \RR^3$ such that 
$(\widetilde{x}_1, \widetilde{x}_2, \widetilde{\theta} ) \equiv (x_1, x_2, \theta) \pmod {\ZZ^3}$, where $(x_1, x_2, \theta)\in \TT^3$. Further, we consider a family of smooth $1$-forms on $\RR^2$ given by
\begin{equation*}\label{def omega}
\omega_{\widetilde{\theta}} \left(\widetilde{x}_1, \widetilde{x}_2 \right) = 
X_2(\widetilde{x}_1, \widetilde{x}_2, \widetilde{\theta})\ d\widetilde{x}_1  
-X_1(\widetilde{x}_1, \widetilde{x}_2, \widetilde{\theta})\ d\widetilde{x}_2,
\end{equation*}
in which we view $\widetilde{\theta}$ as a parameter.
It is clear that $\omega_{\widetilde{\theta}} \left(\widetilde{x}_1, \widetilde{x}_2 \right)$
are periodic in both $\widetilde{x}_1$ and $\widetilde{x}_2$ as well as in the parameter $\widetilde{\theta}$.
Moreover, $\omega_{\widetilde{\theta}} \left(\widetilde{x}_1, \widetilde{x}_2 \right)=0$
if either $\widetilde{x}_1\in \ZZ$ or $\widetilde{x}_2\in \ZZ$.

For each fixed $\widetilde{\theta}\in \RR$, Equation \eqref{divergence free}  implies that the $1$-form 
$\omega_{\widetilde{\theta}}$ is closed, i.e., $d\omega_{\widetilde{\theta}}=0$.
By Poincar\'e Lemma, the form 
$\omega_{\widetilde{\theta}}$ is exact since $\RR^2$ is contractible (see Chapter 8 in \cite{Mun91}). More precisely, $\omega_{\widetilde{\theta}}=du_{\widetilde{\theta}}$, where we choose a particular potential function $u_{\widetilde{\theta}}$ by 
\begin{equation*}\label{def u}
u_{\widetilde{\theta}}(\widetilde{x}_1, \widetilde{x}_2) = \int_{\Gamma} \omega_{\widetilde{\theta}} \ ,
\end{equation*}
for any smooth path $\Gamma$ from $(0, 0)$ to $(x_1, x_2)$ in $\RR^2$ (note that the path integral is independent of the choice of $\Gamma$). Moreover, we claim that $u_{\widetilde{\theta}}$ is 
periodic in $(\widetilde{x}_1, \widetilde{x}_2)$, that is, 
$$
u_{\widetilde{\theta}}(\widetilde{x}_1', \widetilde{x}_2')=u_{\widetilde{\theta}}(\widetilde{x}_1, \widetilde{x}_2)\ \
\text{if} \ (\widetilde{x}_1',\widetilde{x}_2')\equiv (\widetilde{x}_1,\widetilde{x}_2)\pmod
{\ZZ^2}.
$$ 
Indeed, by periodicity of $\omega_{\widetilde{\theta}}$, it suffices to show that 
$\int_{\Gamma} \omega_{\widetilde{\theta}}=0$
for two special types of paths, i.e., 
$$
\Gamma=\Gamma^1_a:=\left\{(\widetilde{x}_1, a): \ 0\le \widetilde{x}_1\le 1 \right\} \ \text{and} \ \
\Gamma=\Gamma^2_a:=\left\{(a, \widetilde{x}_2): \ 0\le \widetilde{x}_2\le 1 \right\},
$$
for any $a\in \RR$. When $\Gamma=\Gamma^1_a$ we denote the bounded region 
$$
\Omega^1_a:=\left\{(\widetilde{x}_1, \widetilde{x}_2):\ 0\le \widetilde{x}_1\le 1, \ 0\le \widetilde{x}_2\le a \right\},
$$
and note that $\omega_{\widetilde{\theta}}$ vanishes on 
$\partial \Omega^1_a\backslash \Gamma^1_a$. By Stokes' Theorem and the fact that 
$\omega_{\widetilde{\theta}}$ is a closed form, we have
$$
\int_{\Gamma^1_a} \omega_{\widetilde{\theta}}=\int_{\partial \Omega^1_a} \omega_{\widetilde{\theta}}=
\int_{\Omega^1_a} d\omega_{\widetilde{\theta}}=0.
$$
In a similar fashion, we can show that 
$\int_{\Gamma^2_a} \omega_{\widetilde{\theta}}=0$ as well.
Thus, $u_{\widetilde{\theta}}$ is periodic in $(\widetilde{x}_1, \widetilde{x}_2)$.

We further notice that if $\widetilde{\theta}'\equiv \widetilde{\theta} \pmod \ZZ$, 
then $\omega_{\widetilde{\theta}'}=\omega_{\widetilde{\theta}}$ and hence, $u_{\widetilde{\theta}'}=u_{\widetilde{\theta}}$. To summarize, 
$u_{\widetilde{\theta}}(\widetilde{x}_1, \widetilde{x}_2)$ is periodic in all arguments, that is,
$$
u_{\widetilde{\theta}'}(\widetilde{x}_1', \widetilde{x}_2')=u_{\widetilde{\theta}}(\widetilde{x}_1, \widetilde{x}_2)
\ \ \text{if} \ (\widetilde{x}_1', \widetilde{x}_2', \widetilde{\theta}')\equiv (\widetilde{x}_1, \widetilde{x}_2, \widetilde{\theta}) \pmod {\ZZ^3}.
$$ 
Therefore, the function $\wH: \TT^3\to \RR$ given by 
$$
\wH\left(x_1, x_2, \theta \right)=u_{\widetilde{\theta}}(\widetilde{x}_1, \widetilde{x}_2),
$$
where $(x_1, x_2, \theta)\equiv (\widetilde{x}_1, \widetilde{x}_2, \widetilde{\theta} ) \pmod {\ZZ^3}$, is well-defined.
It follows from the definition of $\omega_{\widetilde{\theta}}$ and $u_{\widetilde{\theta}}$ 
that $\wH$ is a solution of \eqref{Hamiltonian PDE}.
\end{proof}
\begin{lemma}\label{LhatH}
The Hamiltonian vector field in $\CM$ corresponding to the non-standard symplectic form 
$\homega$ and the Hamiltonian function 
$$
\hH(x_1, x_2, \theta, I)=\wH(x_1, x_2, \Theta(x_1, x_2, \theta)) + I.
$$
is given by $X_{\hH}:=(X_1, X_2, \tau, v)$, where 
$$
v(x_1, x_2, x_3, I):=\frac{\p \wH}{\p \theta}\left(x_1, x_2, \Theta(x_1, x_2, \theta) \right).
$$
\end{lemma}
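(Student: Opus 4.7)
The plan is to verify the defining identity $\homega(X_{\hH}, Y) = d\hH(Y)$ directly. Since both sides are $\RR$-linear in $Y$ and $\homega$ is non-degenerate by construction, it suffices to check this identity against each of the four coordinate basis vectors $\p_{x_1}$, $\p_{x_2}$, $\p_\theta$, $\p_I$ at an arbitrary point of $\CM$.

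First I would compute $d\hH$ from the definition \eqref{fdef Hamiltonian} using the chain rule. Writing $\wH_i$ for the partial of $\wH$ in its $i$-th argument, evaluated at $(x_1, x_2, \Theta(x_1, x_2, \theta))$, and invoking \eqref{Hamiltonian PDE} to substitute $\wH_1 = -X_2$ and $\wH_2 = X_1$, one obtains
\[
\hH_{x_1} = -X_2 + \wH_3 \Theta_{x_1},\quad
\hH_{x_2} = X_1 + \wH_3 \Theta_{x_2},\quad
\hH_\theta = \wH_3 \Theta_\theta,\quad
\hH_I = 1.
\]
Next I would substitute the ansatz $X_{\hH} = (X_1, X_2, \tau, v)$ into the expansion \eqref{non-standard symp} of $\homega$ and evaluate $\homega(X_{\hH}, \cdot)$ on each coordinate basis vector using the elementary identity $(\alpha \wedge \beta)(V, W) = \alpha(V)\beta(W) - \alpha(W)\beta(V)$.

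Comparing the two sides yields four scalar identities. Three of them, arising from the pairings against $\p_{x_1}$, $\p_{x_2}$, and $\p_\theta$, simplify after cancellation of the terms proportional to $\Theta_{x_j}$ and $\Theta_\theta$ to the PDE \eqref{Hamiltonian PDE} together with the definitional equality $v = \wH_3$, and are therefore immediate. The fourth, arising from the $\p_I$-pairing, reduces to
\[
X_1 \Theta_{x_1} + X_2 \Theta_{x_2} + \tau \Theta_\theta = 1,
\]
which is precisely the identity \eqref{speed change} expressing the fact that $\Theta$ is the backward hitting time of the flow generated by $(X_1, X_2, \tau)$. Thus the verification is formal bookkeeping whose only conceptual inputs are the PDE established in Lemma~\ref{LtildeH} and the defining relation \eqref{speed change} of $\Theta$; I do not anticipate any serious obstacle beyond carefully tracking signs in the four wedge-product evaluations.
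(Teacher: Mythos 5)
Your verification is exactly the paper's proof: the paper likewise expands $\homega(X_{\hH},\cdot)$ termwise, matches the $dx_1$, $dx_2$, $d\theta$ coefficients against $d\hH$ via the chain rule and \eqref{Hamiltonian PDE}, and identifies the $dI$ coefficient with $1$ via \eqref{speed change}. One caveat on signs: with $\homega$ as in \eqref{non-standard symp}, the pairing against $\p_\theta$ yields $-v\,\p\Theta/\p\theta$, so matching it with $\p\hH/\p\theta=\wH_3\,\p\Theta/\p\theta$ forces $v=-\wH_3$ rather than the ``definitional'' $v=\wH_3$ you invoke; this discrepancy is inherited from the paper's own statement of $v$, but you should track it explicitly rather than treat the $\p_\theta$ identity as immediate.
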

\begin{proof}[Proof of the lemma] Using \eqref{speed change}, it is straightforward to show that
{\allowdisplaybreaks
\begin{align*}
\homega(X_{\hH}, \cdot) &= \left(-X_2 - v \frac{\p \Theta}{\p x_1}\right) dx_1 
                             +\left(X_1 - v \frac{\p \Theta}{\p x_2}\right) dx_2 - v \frac{\p \Theta}{\p\theta} d\theta \\
                             & \ + \left(X_1 \frac{\p \Theta}{\p x_1} + X_2 \frac{\p \Theta}{\p x_2} + \tau \frac{\p \Theta}{\p \theta} \right) dI \\
                             &=\frac{\p \hH}{\p x_1}  dx_1 + \frac{\p \hH}{\p x_2}  dx_2 + \frac{\p \hH}{\p \theta}  d\theta + dI=d\hH.
\end{align*}
}
Thus, $X_{\hH}$ is the Hamiltonian vector field of $\hH$ under the symplectic form 
$\homega$.
\end{proof}

\bibliography{Coexistence}{}
\bibliographystyle{alpha}
\end{document}